\pgfplotsset{compat=newest}
\theoremstyle{definition}
\newtheorem{defn}{Definition}[section]
\newtheorem{example}[defn]{Example}
\theoremstyle{plain}
\newtheorem{thm}[defn]{Theorem}
\newtheorem{prop}[defn]{Proposition}
\newtheorem{lem}[defn]{Lemma}
\newtheorem{cor}[defn]{Corollary}
\theoremstyle{remark}
\newtheorem{rem}[defn]{Remark}
\newcommand{\Ho}{\mathrm{Ho}}
\newcommand{\perf}{\mathrm{Perf}}
\newcommand{\qcoh}{\mathrm{QCoh}}
\newcommand{\QCoh}{\mathrm{QCSh}}
\newcommand{\coh}{{\mathrm{Coh}}}
\newcommand{\Idem}{\mathrm{Idem}}
\newcommand{\Ind}{\mathrm{Ind}}
\newcommand{\Spec}{\mathrm{Spec}}
\newcommand{\Sub}{\mathrm{Sub}}
\newcommand{\tSpec}{\mathrm{Spec}_{\triangle}}
\newcommand{\mo}{\mathrm}
\newcommand{\ol}{\overline}
\newcommand{\os}{\overset}
\newcommand{\Supp}{\mathrm{Supp}}
\newcommand{\Nat}{\mathrm{Nat}}
\newcommand{\sO}{\mathscr{O}}
\newcommand{\Hom}{\mathrm{Hom}}
\newcommand{\ld}{\mathbb{L}}
\newcommand{\rd}{\mathbb{R}}
\newcommand{\dotimes}{\otimes^{\mathbb{L}}}
\newcommand{\sNat}{\mathcal{N}\hspace{-0.08cm}\mathit{at}}
\newcommand{\Perf}{\mathrm{Cat}_{\infty}^{\mathrm{perf}}}
\newcommand{\intg}{\mathbb{Z}}
\newcommand{\Id}{\mathrm{Id}}
\newcommand{\id}{\mathrm{id}}
\newcommand{\Sing}{\mathrm{Sing}}
\newcommand{\ang}[1]{{\left\langle{#1}\right\rangle}}
\newcommand{\End}{\mathrm{End}}
\newcommand{\Fib}{\mathrm{Fib}}
\newcommand{\oSpec}{\mathrm{Spec}_{\otimes}}
\newcommand{\etch}{\mathrm{H}_{\text{\'{e}t}}}
\newcommand{\Spc}{\mathrm{Spc}}
\newcommand{\fSpec}{{\mathrm{Spec}^{\mathrm{fin}}}}
\newcommand{\oSpc}{\mathrm{Spc}_{\otimes}}
\newcommand{\tSpc}{\mathrm{Spc}_{\triangle}}
\newcommand{\mapsfrom}{\mathrel{\reflectbox{\ensuremath{\longmapsto}}}}
\numberwithin{equation}{section}
\begin{document}

\title{The Spectrum of Stable Infinity Categories with Actions}

\author{Hisato Matsukawa}
\address{Department of Mathematics, Faculty of Science, Hokkaido University, Kita 10, Nishi 8, Kita-Ku, Sapporo, Hokkaido, 060-0810, Japan}
\email{matsukawa.hisato.f4@elms.hokudai.ac.jp, hmnr0211@gmail.com}
\thanks{The author is supported by JST SPRING, Grant Number JPMJSP2119.}

\subjclass[2010]{14F08, 14A20, 14A15, 14M10, 18G80, 18N60}
\keywords{Matsui spectrum, Balmer spectrum, stable infinity category, derived category}
\date{}

\dedicatory{}

\begin{abstract}
We introduce the relative Matsui spectrum, a new invariant associated with a stable \(\infty\)-category equipped with an action.  
This construction generalizes both Balmer's tensor triangular spectra and Matsui's triangular spectra, and provides a unified framework for classifying thick submodules.  
We establish its fundamental properties, including universality, comparison with existing spectra, and descent, and construct a natural morphism to the Balmer spectrum of the base.  
Applications show that the relative Matsui spectrum recovers the underlying classical geometric spaces from categorical data in various settings: categories of perfect complexes of schemes, twisted derived categories, categories of singularities, and derived matrix factorization categories.
Thus the relative Matsui spectrum extends the reach of tensor triangular geometry beyond globally tensorial settings, while preserving geometric intuition.
\end{abstract}

\maketitle
\tableofcontents

\section{Introduction}

The study of spectra associated with triangulated or stable \(\infty\)-categories has played a central role in bridging abstract categorical structures with underlying geometry.  
Balmer~\cite{Ba} introduced the tensor triangular spectrum \(\oSpec(T)\) of a tensor triangulated category \(T\), thereby providing a powerful framework for classifying thick tensor ideals.  
When \(T=\perf(X)\) for a topologically noetherian scheme \(X\), the spectrum \(\oSpec(T)\) recovers the scheme \(X\) together with its structure sheaf.  
More recently, Matsui proposed a variant of the spectrum that applies to triangulated categories without tensor structures, thereby expanding the range of situations where spectra can be defined.  
It has since been further studied by Hirano--Ouchi~\cite{HO, HO2}, Hirano--Kalck--Ouchi~\cite{HKO}, Ito~\cite{It}, and Ito--Matsui~\cite{IM}.

In this paper, we develop a relative version of the Matsui spectrum, which we call the \emph{relative Matsui spectrum}.  
Given a stable \(\infty\)-category \(D\) with an action of a stable \(\infty\)-category \(S\), we construct a locally ringed space
\[
\Spec(D,S),
\]
which simultaneously generalizes Balmer's and Matsui's construction.
This spectrum is the universal space classifying thick \(S\)-submodules of \(D\) and admits a canonical morphism to the Balmer spectrum of the base \(\oSpec(S)\).

We establish fundamental properties of the relative Matsui spectrum, including universality, comparison with existing spectra, and descent.
In particular, we show that, under suitable hypotheses, the relative Matsui spectrum recovers classical geometric objects.  
For example:
\begin{enumerate}
  \item For a rigid tensor triangulated \(\infty\)-category \(T\), taking \(D=S=T\) recovers Balmer spectrum.
  \item For a stable \(\infty\)-category \(D\) with \(S=\perf(\mathbb{Z})\) or \(\mo{Sp}^{\omega}\), it recovers Matsui spectrum. 
  \item For a flat morphism of schemes \(X\to Y\) with \(Y\) regular, the category \(\perf(X)\) admits a natural \(\perf(Y)\)-action, and the spectrum is a locally ringed space over \(Y\), fiberwise given by the Matsui spectrum of \(\perf(X_y)\).  
  Moreover, any relatively derived equivalent scheme appears as an open subscheme of the spectrum.
  \item For twisted derived categories \(\perf(X,\mu)\) with canonical \(\perf(X)\)-action, the spectrum is canonically isomorphic to \(X\).
  \item For a locally complete intersection scheme \(X\), the derived category of coherent sheaves \(\coh(X)\) admits a natural \(\perf(X)\)-action, and its spectrum is a space over \(X\).  
  This spectrum can be described explicitly.  
  For instance, the Krull dimension of each fiber coincides with the embedding codimension at the corresponding point.
  Submodules of \(\coh(X)\) correspond bijectively to specialization-closed subsets of the spectrum.
  \item For the category of singularities \(\Sing(X)\) with its \(\perf(X)\)-action, if \(X\) is locally a hypersurface, the spectrum recovers the singular locus of \(X\).
  \item For derived matrix factorization categories \(\mathrm{DMF}(X,L,W)\) with canonical \(\mathrm{DMF}(X,L,0)\)-action, the spectrum recovers the relative singular locus, extending results of Hirano.
  \item For a Severi--Brauer scheme \(X\to Y\), the spectrum encodes both \(X\) and additional copies of the base \(Y\), reflecting semi-orthogonal decompositions.  
  When the relative dimension is one, we describe the spectrum explicitly and classify its submodules.
\end{enumerate}

Thus the relative Matsui spectrum extends the reach of tensor triangular geometry to settings without a global tensor product, while preserving geometric intuition through the classification of subcategories.  

The paper is organized as follows.  
In Section~\ref{section:construction}, we construct the relative Matsui spectrum.  
In Section~\ref{section:universality}, we establish its universality.
In Section~\ref{section:fundamental_results}, we study its basic properties, including classification of submodules and comparison with existing spectra.  
In Section~\ref{section:commutativity_of_localizations}, we recall some fundamental properties of commutativity of localizations for later use.  
In Section~\ref{section:morphism_to_base}, we construct a morphism from the relative Matsui spectrum to the spectrum of the base and establish a fiberwise decomposition.  
In Section~\ref{section:applications}, we apply the theory to concrete examples: categories of perfect complexes of schemes, twisted derived categories, derived category of complexes with finitely generated homology over Koszul algebras, categories of singularities, derived categories of coherent sheaves, and derived matrix factorization categories.

\section*{Assumptions and Notations}

We adopt the same axioms concerning the existence of universes as in \cite{Lu1}.

All schemes appearing in this paper are assumed to be Noetherian.  
Let \(X\) be a scheme.  
We denote by \(\perf(X)\) the \(\infty\)-category of perfect complexes on \(X\).  
Similarly, \(\coh(X)\) denotes the bounded derived \(\infty\)-category of coherent sheaves on \(X\), and \(\qcoh(X)\) denotes the derived \(\infty\)-category of \(\sO_X\)-modules with quasi-coherent cohomology.  
If \(X\) is separated, then \(\qcoh(X)\) is equivalent to the derived \(\infty\)-category \(\mathrm{D}(\QCoh(X))\) of quasi-coherent sheaves on \(X\) (see \cite[Corollary~5.5]{BN}).  
We write \(\Sing(X)\) for the idempotent completion of the Verdier quotient \(\coh(X)/\perf(X)\), called the category of singularities of \(X\).  
The homotopy category of an \(\infty\)-category \(D\) is denoted by \(\Ho(D)\) or \(D_0\).

\(\Perf\) denote the \(\infty\)-category of idempotent complete stable \(\infty\)-categories.

\section{Construction}\label{section:construction}
In this section, we construct our spaces.
Starting from a triangulated category together with a family of thick subcategories, we define \(S\)-primes and quasi-\(S\)-primes and endow their sets with natural topologies.

Let \(T\) be a triangulated category, and let \(S\) be a set of thick subcategories of \(T\).
Assume that for any transfinite sequence of thick subcategories \( I_1 \subset I_2 \subset \cdots \in S\), the union \(\bigcup_i I_i\) belongs to \(S\).

\begin{defn}
  In the above setting, we define:
  \begin{enumerate}
    \item A thick subcategory \(P \in S\) is called a \emph{quasi-\(S\)-prime} if for any \(I_1, I_2 \in S\) with \(I_1, I_2 \supsetneq P\), we have \(I_1 \cap I_2 \supsetneq P\). 
    \item A thick subcategory \(P \in S\) is called an \emph{\(S\)-prime} if for any family \(\{I_i\}_{i \in I}\subset S\) with \(I_i \supsetneq P\), we have \(\bigcap_i I_i \supsetneq P\).
  \end{enumerate}
\end{defn}

For an \(S\)-prime \(P\), let \(\ol{P}\) denote the intersection \(\bigcap_{I\in S,\ I\supsetneq P} I\).
If \(S\) is closed under intersections, then \(\ol{P}\) is the smallest element of \(S\) strictly containing \(P\).

Let \(\Spc(T,S)\) denote the set of \(S\)-primes, and \(\Spc'(T,S)\) the set of quasi-\(S\)-primes.
We define a topology on \(\Spc(T,S)\) and \(\Spc'(T,S)\).
For a subset \(W \subset T\), the support of \(W\) in \(\Spc(T,S)\) is defined as
\[
\Supp(W):=\{P \in \Spc(T,S) \mid W \not\subset P\}.
\]
The family of closed subsets of \(\Spc(T,S)\) is generated by \(\{\Supp(a)\mid a\in T\}\).
Similarly for \(\Spc'(T,S)\).
There is a natural dense embedding \(\Spc(T,S)\hookrightarrow \Spc'(T,S)\).

\begin{rem}\label{rem:T_0_ness}
Let \(P\) and \(Q\) be \(S\)-primes.  
Then \(Q \in \overline{\{P\}}\) if and only if \(Q \subset P\).  
In particular, the space \(\Spec(D, S)\) is a \(T_0\)-space.
Moreover, for any \(W \subset T\), we have
\(
\Supp(W) = \Supp(\bigcap_{\substack{I\in S, W\subset I}} I)\), and this set is specialization-closed.
\end{rem}

There are naturally two presheaves of triangulated categories on \(\Spc(T,S)\) and \(\Spc'(T,S)\):
\begin{align*}
\mathcal{I}&: U \mapsto \{t\in T \mid \Supp(t)\cap U = \emptyset\} =: I_U,\\
\mathcal{T}&: U \mapsto \Idem(T/I_U) =: T_U,
\end{align*}
where \(\Idem\) denotes the idempotent completion and \(T/I_U\) the Verdier quotient.

\begin{lem}\label{lem:existance_of_prime}
Let \(I \in S\) and \(a \in T \setminus I\).
Then there exists an \(S\)-prime \(P \supset I\) such that \(a \notin P\).
\end{lem}
\begin{proof}
  By Zorn's lemma, the set \(\{J\in S \mid I\subset J,\ a\notin J\}\) has a maximal element \(P\) with respect to inclusion.
  Then \(a \in \bigcap_{I\in S,\ I\supsetneq P} I\), so \(P\) is an \(S\)-prime.
\end{proof}

\begin{lem}\label{lem:classification_of_submodules}
  The following map is injective:
  \[
  \Supp:S \longrightarrow 2^{\Spc(T,S)}.
  \]
  If \(S\) is closed under intersections, then its left inverse is given by 
  \[
\Spc(T,S)  \supset Z  \longmapsto \{a\in T \mid \Supp(a)\subset Z\}.
  \]
\end{lem}
\begin{proof}
This follows from Lemma \ref{lem:existance_of_prime}.
\end{proof}
\begin{example}
  \begin{enumerate}
    \item Let \(T\) be a tensor triangulated category and \(S\) the set of radical tensor ideals of \(T\).
    Then \(\Spc'(T,S)\) is the Balmer spectrum of \(T\) defined in \cite{Ba}.
    Indeed, a radical tensor ideal \(I\) is a prime ideal in the sense of Balmer if and only if it is quasi-\(S\)-prime.
    \item Let \(T\) be a triangulated category and \(S\) the set of thick subcategories of \(T\).
    Then \(\Spc(T,S)\) is the Matsui spectrum of \(T\) defined in \cite{Ma1}.
  \end{enumerate}
\end{example}

\begin{defn}
A \emph{datum} \((D,S,B,\phi)\) consists of algebra objects \(B,S \in \Perf\) with a homomorphism \(\phi:B \to S\) and a left \(S\)-module \(D \in \Perf\).
We denote the action of \(S\) on \(D\) by \(- \otimes -\).
When the context is clear, we simply write \((D,S,B)\) or \((D,S)\) instead of \((D,S,B,\phi)\).

A thick subcategory \(I \subset D\) is called an \(S\)-submodule if it is closed under the \(S\)-action; that is, \(s \otimes a \in I\) for all \(s \in S\), \(a \in I\).
The set of \(S\)-submodules of \(D\) is denoted by \(\Sub_S(D)\).
For a subset \(W \subset D\), we write \(\ang{W}_S\) for the minimal \(S\)-submodule containing \(W\).
\end{defn}

If \(W\) is closed under the \(S\)-action, then \(\ang{W}_S\) coincides with \(\ang{W}\), the minimal thick subcategory containing \(W\).

\begin{example}
  \begin{enumerate}
    \item Let \(f:X\to Y\) be a morphism of schemes.
    Then \(\perf(Y)\) acts on \(\perf(X)\) by
    \[
      G\otimes F := \ld f^* G \dotimes_{\sO_X} F
    \]
    for \(G\in \perf(Y), F\in \perf(X)\).
    Hence, \((\perf(X), \perf(Y), \perf(\mathbb{Z}))\) is a datum.
    Similarly, \(\coh(X)\) and \(\Sing(X)\) are \(\perf(Y)\)-modules and thus form data.

    \item Let \(X\) be a scheme, and let \(\mu=(\mu_0,\mu_1)\in \etch^2(X,\mathbb{G}_m)\times \etch^1(X,\mathbb{Z})\).
    Denote by \(\perf(X,\mu)\) the twisted derived category of perfect complexes defined in \cite{To2}.
    If \(\mu_1=0\), then \(\perf(X,\mu)\) is the derived category of perfect complexes of twisted sheaves (see \cite{Ca} for details).
    There is a natural action of \(\perf(X)\) on \(\perf(X,\mu)\), and \((\perf(X,\mu),\perf(X),\perf(\mathbb{Z}))\) is a datum.

    \item Let \(X\) be a scheme, \(L\) a line bundle on \(X\), and \(W\in \Gamma(X,L)\).
    Let \(\mo{DMF}(X,L,W)\) be the derived category of matrix factorizations defined in \cite{Pos, EP}.
    Then \(\mo{DMF}(X,L,W)\) is naturally a \(\mo{DMF}(X,L,0)\)-module, and \((\mo{DMF}(X,L,W),\mo{DMF}(X,L,0),\perf(\mathbb{Z}))\) is a datum.

    \item Let \(D\) be a stable \(\infty\)-category.
    Then \(D\) is naturally a module over the \(\infty\)-category of finite spectra \(\mo{Sp}^{\omega}\), and \((D,\mo{Sp}^{\omega},\mo{Sp}^{\omega})\) is a datum.

    \item Let \(k\) be a commutative ring, and let \(D_{\mo{dg}}\) be a \(k\)-linear dg-category.
    Then the associated stable \(\infty\)-category \(D\) has a natural \(\perf(k)\)-action, so \((D, \perf(k), \perf(k))\) is a datum.
  \end{enumerate}
\end{example}

\begin{defn}
Let \((D,S,B)\) be a datum.
We define a ringed space, called the \emph{relative Matsui spectrum} of the datum \((D,S,B)\), by
\begin{align*}
  \Spec(D,S,B)&:=(\Spc(D,S),\sO),\\
  \Spc(D,S)&:=\Spc(D,\Sub_S(D)), \quad
  \sO:=a\Bigl(U \mapsto \Nat_B^0(\Id_{D_U},\Id_{D_U}) \Bigr),
\end{align*}
where \(a\) denotes sheafification, and \(D_U\) is naturally regarded as an \(S\)-module.
\end{defn}

\begin{rem}
If \(S=B\), then one can show that \(\Spec(D,S,B)\) is a locally ringed space, although we omit the proof here.
By Theorem~\ref{thm:restriction_of_the_base}, many spaces \(\Spec(D,S,B)\) embed into \(\Spec(D,B,B)\); in this case, \(\Spec(D,S,B)\) is also a locally ringed space.
Moreover, if \(\Spec(D,S,B)\) is a locally ringed space and admissible, then the morphism in Theorem~\ref{thm:morphism_to_the_base} is a morphism of locally ringed spaces.
\end{rem}

\section{Universality}\label{section:universality}
In this section, we give a universal criterion for our spaces.
Matsui~\cite{Ma1} provided a characterization of the Matsui spectrum, but our result is slightly different and more general.

\begin{defn}
  Let \((D,S)\) be a datum.
  A \emph{support datum} of \((D,S)\) is a topological \(T_0\)-space \(X\) equipped with a map \(\Supp:\mo{Ob}(D)\to 2^X\) satisfying the following conditions:
  \begin{enumerate}
    \item (shift invariance) \(\Supp(a)=\Supp(a[1])\) for any \(a\in D\).
    \item (action) For any \(a\in D\), \(s\in S\), we have \(\Supp(s\otimes a)\subset \Supp(a)\).
    \item (direct sums) \(\Supp(a\oplus b)=\Supp(a)\cup \Supp(b)\) for any \(a,b\in D\).
    \item (exactness) \(\Supp(b)\subset \Supp(a)\cup \Supp(c)\) for any exact triangle \(a\to b\to c\) in \(D\).
    \item (distinguishability) For any two distinct submodules \(I,J\subset D\), we have \(\bigcup_{a\in I}\Supp(a)\neq \bigcup_{b\in J}\Supp(b)\).
    \item (topology) The family of closed subsets of \(X\) is generated by \(\{\Supp(a)\mid a\in D\}\).
  \end{enumerate}
  Let \(X,Y\) be support data of \((D,S)\).
  A \emph{map of support data} is a continuous map \(f:X\to Y\) compatible with \(\Supp\), i.e., \(f^{-1}(\Supp_Y(a))=\Supp_X(a)\) for all \(a\in D\).

  A support datum \(X\) of \((D,S)\) is called \emph{universal} if for any support datum \(Y\), there exists a unique map \(X\to Y\) of support data.
\end{defn}

\begin{thm}\label{thm:universality_of_matsui_spectra}
  Let \((D,S)\) be a datum.
  Then the relative Matsui spectrum \(\Spc (D,S)\) is the universal support datum.
\end{thm}

For a triangulated category \(T\), a topological \(T_0\)-space \(X\) with an assignment \(\Supp:T\to X\) is called a support datum of \(T\) if it satisfies the above conditions except (2).
Our conditions are stronger than Matsui's original definition~\cite{Ma1}, where (5), (6), and the \(T_0\)-condition are not imposed.

\begin{thm}
Let \(T\) be a triangulated category.
Then the Matsui spectrum \(\tSpc(T)\) is a universal support datum of \(T\).
\end{thm}
The proof is the same as that of Theorem~\ref{thm:universality_of_matsui_spectra}.

\medskip

We now prove this fact in a more general setting of semilattices.

A \emph{semilattice} is a partially ordered set such that any two elements admit a supremum.
A map of semilattices is a map preserving finite suprema.
A semilattice is called \emph{ind-complete} if any non-empty subset admits a supremum.

Let \(L\) be a semilattice.
A subset \(S\subset L\) is called an \emph{ind-object} if it is closed under taking suprema of two elements, is non-empty, and satisfies: if \(s\in S\) and \(l\le s\) then \(l\in S\).
The \emph{ind-completion} \(\Ind(L)\) is the set of all ind-objects.
It is partially ordered by inclusion and forms an ind-complete semilattice.
For any ind-complete semilattice \(L'\) and a semilattice map \(f:L\to L'\), there exists a unique extension \(\tilde{f}:\Ind(L)\to L'\) preserving arbitrary suprema.
A preordered set may naturally be regarded as a category.
It is easy to check that \(\Ind(L)\), together with the natural morphism \(L\to \Ind(L): l\mapsto \{l'\in L\mid l'\le l\}\), coincides with the categorical ind-completion.

For a set \(X\), \(2^X\) is partially ordered by inclusion, and is an ind-complete semilattice.

\begin{defn}
  Let \(L\) be a semilattice.
  A \emph{support datum} of \(L\) is a topological \(T_0\)-space \(X\) with a map of semilattices \(\Supp:L\to 2^X\) satisfying:
  \begin{enumerate}
    \item (distinguishability) The induced map \(\Supp:\Ind(L)\to 2^X\) is injective.
    \item (topology) The family of closed subsets of \(X\) is generated by \(\{\Supp(l)\mid l\in L\}\).
  \end{enumerate}
  Let \(X,Y\) be support data of \(L\).
  A \emph{map of support data} is a continuous map \(f:X\to Y\) such that \(\Supp_X(l)=f^{-1}(\Supp_Y(l))\) for any \(l\in L\).

  A support datum \(X\) of \(L\) is called \emph{universal} if for any support datum \(Y\) of \(L\), there exists a unique map \(X\to Y\) of support data.
\end{defn}

Let \(L\) be a semilattice.
We set
\begin{align*}
X_L&=\{P\in \Ind(L)\mid \bigcap_{S\in \Ind(L), S\supsetneq P}S\supsetneq P\},\\
\Supp(l)&=\{S\in \Ind(L)\mid l\notin S\}\quad (l\in L).
\end{align*}
We topologize \(X_L\) by declaring the family of closed subsets to be generated by \(\{\Supp(l)\mid l\in L\}\).

\begin{thm}
  \(X_L\) is the universal support datum of \(L\).
\end{thm}
\begin{proof}
  First, we prove that \(X_L\) is a support datum.  
Take \(S\in \Ind(L)\) and \(l\in L\setminus S\).  
The set \(\{T\in \Ind(L)\mid S\subset T,\, l\notin T\}\) has a maximal element \(P\) by Zorn’s lemma.  
Since \(l\in \bigcap_{T\in \Ind(L),\, T\supsetneq P}T\), it follows that \(P\in X_L\).  
This establishes the injectivity of \(\Supp:\Ind(L)\to 2^{X_L}\).  
The remaining conditions are straightforward.

  Next, we prove universality.
  Let \(Y=(Y,\Supp_Y)\) be another support datum.
  For \(P\in X_L\), let \(\ol{P}=\bigcap_{S\in \Ind(L),\, S\supsetneq P}S\).
  By definition, \(\Supp_Y(\ol{P})\setminus \Supp_Y(P)\neq \emptyset\).
  Choose a point \(y_P\in \Supp_Y(\ol{P})\setminus \Supp_Y(P)\).
  Define \(f:X_L\to Y\) by \(P\mapsto y_P\).
  If \(l\notin P\), then \(\Supp_Y(l)\cup\Supp_Y(P)=\Supp_Y(\sup\{P,l\})=\Supp_Y(\ol{P})\).
  Thus \(f^{-1}(\Supp_Y(l))=\{P\in X_L\mid l\notin P\}\), hence \(f\) is a map of support data.
  Since any map of support data is compatible with the natural injection \(Y\to 2^L: y\mapsto \{l\in L\mid y\notin \Supp_Y(l)\}\), uniqueness follows.
\end{proof}

In fact, isomorphism classes of support data of \(L\) correspond bijectively to subsets of \(\Ind(L)\cup\{\emptyset\}\) containing \(X_L\).

\begin{proof}[Proof of Theorem \ref{thm:universality_of_matsui_spectra}]
Let \(\Sub_{S}^{\mo{fin}}(D)\) be the semilattice of finitely generated submodules of \(D\) ordered by inclusion.
Then \(\Ind(\Sub_{S}^{\mo{fin}}(D))\) is isomorphic to the semilattice \(\Sub_{S}(D)\), and \(X_{\Sub_{S}^{\mo{fin}}(D)}\) is the set of prime thick subcategories.
Thus \(\Spc(D,S)\) is the universal support datum of \(\Sub_{S}^{\mo{fin}}(D)\).

For a support datum \(Y\) of \((D,S)\), the map
\(\Sub_{S}^{\mo{fin}}(D)\to 2^Y: I\mapsto \bigcup_{a\in I}\Supp(a)\) makes \(Y\) a support datum of \(\Sub_{S}^{\mo{fin}}(D)\).
Conversely, for a support datum \(Y'\) of \(\Sub_{S}^{\mo{fin}}(D)\), the map \(\mo{Ob}(D)\to 2^{Y'}: a\mapsto \Supp(\ang{a})\) makes \(Y'\) a support datum of \((D,S)\).
These constructions are mutually inverse, so the categories of support data of \((D,S)\) and of \(\Sub_{S}^{\mo{fin}}(D)\) are equivalent.
This proves that \(\Spc(D,S)\) is the universal support datum of \((D,S)\).
\end{proof} 

\begin{example}
For a tensor triangulated category \(T\), let \(\mo{Ideal}_{\otimes,fin}(T)\) be the semilattice of radicals of finitely generated tensor ideals of \(T\).
Then the Balmer spectrum \(\oSpc T\) is a support datum of \(\mo{Ideal}_{\otimes,fin}(T)\), but in general it is not universal in our sense.
If \(T\) is rigid, then \(\mo{Ideal}_{\otimes,fin}(T)\) coincides with the semilattice of finitely generated submodules of \(T\), and by universality there is a natural embedding \(\Spc(T,T)\to \oSpc T\).
\end{example}

\section{Fundamental Results}\label{section:fundamental_results}
In this section, we present several fundamental results on relative Matsui spectra, including the classification of submodules and comparisons with the Balmer and Matsui spectra.
The classification result in Theorem~\ref{thm:classification_of_submodules}, together with its corollaries, can also be established for the original Matsui spectra in the same manner.

\begin{prop}\label{prop:open_embedding}
Let \((D,S)\) be a datum, and let \(I\subset D\) be an \(S\)-submodule.
Then there is a natural morphism
\[
\Spec(\Idem(D/I),S)\longrightarrow \Spec(D,S).
\]
If \(I\) is finitely generated, this is an open embedding.
\end{prop}
\begin{proof}
The proof is the same as that of \cite[Propositions~4.2 and 4.3]{Ma2}.
\end{proof}

\begin{prop}
  Let \((D,S)\) be a datum, and let \(I\subset S\) be a two-sided ideal such that \(I\otimes D=0\).
  Then there is a natural isomorphism
  \[
  \Spec(D,\Idem(S/I))\cong \Spec(D,S).
  \]
\end{prop}
\begin{proof}
We have \(\Sub_S(D)=\Sub_{\Idem(S/I)}(D)\), and the isomorphism follows from the definition.
\end{proof}

\begin{thm}[Classification of submodules]\label{thm:classification_of_submodules} Let \((D,S)\) be a datum. Then, the map 
  \[\Supp:\Sub_S(D)\to \{\text{subsets of }\mo{\Spc(D,S)}\}\]
  is injective and its left inverse is given by
  \[\Spc(D,S)\supset Z\mapsto \{a\in D\mid \Supp(a)\subset Z\}.\]
\end{thm}
  \begin{proof} See Lemma \ref{lem:classification_of_submodules}. \end{proof}

\begin{cor}
Every \(S\)-submodule of \(D\) is an intersection of prime \(S\)-submodules.
\end{cor}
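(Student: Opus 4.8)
The plan is to obtain this as an immediate formal consequence of Theorem~\ref{1-5}. First I would fix an $S$-submodule $I \subset D$ and set $Z = \Supp(I)$. By the very definition of support, for a prime $S$-submodule $P$ we have $P \notin \Supp(I)$ precisely when $I \subset P$, so that
\[
\{a \in D \mid \Supp(a) \subset Z\} \;=\; \bigcap_{P \in \sSpec(D,S) \setminus Z} P \;=\; \bigcap\{P \in \sSpec(D,S) \mid I \subset P\}.
\]
Next I would invoke Theorem~\ref{1-5}: it asserts that the displayed left inverse, applied to $Z = \Supp(I)$, recovers $I$; equivalently, this is exactly the identity $\bigcap\{P \mid I \subset P\} = I$ established in the proof of that theorem via Lemma~\ref{1-4}. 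Combining the two lines exhibits $I$ as the intersection of the family of prime $S$-submodules containing it, which is precisely the claim.

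The only point that needs a separate word is the degenerate case $I = D$: then $\Supp(I) = \sSpec(D,S)$, the indexing family of primes is empty, and one must read the empty intersection inside $D$ as $D$ itself, so the statement still holds. (Likewise $\sSpec(D,S) = \emptyset$ forces $D = 0 = I$ by Theorem~\ref{1-5}, and again the empty-intersection convention applies.) I do not anticipate any genuine obstacle here: once Lemma~\ref{1-4} and Theorem~\ref{1-5} are in hand the corollary is purely formal, the bookkeeping about empty intersections being the only subtlety worth mentioning.
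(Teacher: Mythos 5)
Your proof is correct and takes essentially the same route as the paper: Corollary~\ref{1-6} is exactly the identity $\bigcap\{P \in \sSpec(D,S) \mid I \subset P\} = I$ already established in the proof of Theorem~\ref{1-5} via Lemma~\ref{1-4}, and you obtain it by the same unwinding of the left-inverse formula. The edge-case remarks about empty intersections are a reasonable sanity check but not a genuine additional step, as the convention that an empty intersection inside $D$ equals $D$ is already implicit in Theorem~\ref{1-5}.
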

\begin{proof}
By definition, for any subset \(Z \subset \Spc(D,S)\),
\[
\{\,a \in D \mid \Supp(a) \subset Z \,\}
= \bigcap_{P \in \Spc(D,S)\setminus Z} P.
\]
\end{proof}

\begin{cor}\label{cor:empty_spectrum}
\(\Spc(D,S)=\emptyset\) if and only if \(D\cong 0\).
\end{cor}
\begin{proof}
  If \(D\not\cong 0\), then \(D\) has at least two distinct \(S\)-submodules: \(0\) and \(D\).
  By Theorem~\ref{thm:classification_of_submodules}, it follows that \(\Spc(D,S)\neq \emptyset\).
\end{proof}

\begin{rem}
Matsui and Takahashi \cite[Theorem 4.5]{Ma1}, \cite[Theorem 2.9]{MT} proved that the Matsui spectrum \(\tSpc T\), together with the support map, classifies radical thick subcategories of a triangulated category \(T\).  
In the same manner as above, we can show that every thick subcategory is radical in the sense of \cite{Ma1}, and in particular that the Matsui spectrum classifies all thick subcategories.  
Moreover, we can show that the Matsui spectrum \(\tSpc T\) is empty if and only if \(T \cong 0\).  
This was proved in \cite[Proposition~2.17]{HKO} in the case where \(T\) is classically finitely generated, that is, finitely generated as a thick subcategory of itself.
\end{rem}

For a commutative algebra object \(D\) in \(\Perf\), the Balmer spectrum \(\oSpc D\) classifies radical tensor ideals of \(D\) in the sense of \cite{Ba}, whereas our space \(\Spc(D,D)\) classifies all tensor ideals.  
If \(D\) is rigid, then every tensor ideal is radical, and hence our space is closely related to the Balmer spectrum.

\begin{thm}[Comparison with the Balmer spectrum]\label{thm:comparison_with_the_Balmer_spectrum}
Let \(D\) be a commutative algebra object in \(\Perf\), and let \(D_0\) denote its homotopy category.
Assume \(D\) is rigid.
Then there is a natural dense embedding
\[
\Spec(D,D,D)\longrightarrow \oSpec(D_0).
\]
If \(\oSpec(D_0)\) is topologically noetherian, then this embedding is an isomorphism.
\end{thm}
\begin{proof}
  Tensor ideals of \(D_0\) correspond bijectively to submodules of \(D\).
  By \cite[Remark~4.3]{Ba}, any tensor ideal is radical.
  Hence the underlying space \(\oSpc(D_0)\) is \(\Spc'(D,\Sub_D(D))\).
  There is a natural dense embedding \(\iota:\Spc(D,\Sub_D(D))\to\Spc'(D,\Sub_D(D))\).
  The morphism of structure sheaves is induced by the isomorphisms
  \[
  \Nat_D^0(\Id_{D_{\iota^{-1}(U)}},\Id_{D_{\iota^{-1}(U)}})\cong \End(1_{D_{0,U}}),\quad U\subset \oSpec(D_0),
  \]
  where \(1_{D_{0,U}}\) denotes the unit object of \(D_{0,U}\).
  Note that \(D_{0,U}\) is the homotopy category of \(D_{\iota^{-1}(U)}\), and any \(\eta\in \Nat_D^0(\Id_{D_{\iota^{-1}(U)}},\Id_{D_{\iota^{-1}(U)}})\) is determined by its evaluation at \(1_{D_{\iota^{-1}(U)}}\), since every object of \(D_{\iota^{-1}(U)}\) is equivalent to a retract of \(d\otimes 1_{D_{\iota^{-1}(U)}}\) for some \(d\in D\).

  Assume now that \(\oSpec(D_0)\) is topologically noetherian.
  Let \(P\) be a quasi-\(\Sub_D(D)\)-prime, and let \(x\in\oSpec(D_0)\) be the corresponding point.
  To prove that \(\iota\) is surjective, we must show that \(P\) is in fact an \(\Sub_D(D)\)-prime.
  We have \(\Supp_\otimes(P)=\{y\in\oSpec(D_0)\mid x\notin\overline{\{y\}}\}\).
  Let \(\overline{P}\) be the tensor ideal with support \(\Supp_\otimes(P)\cup \{x\}\).
  By \cite[Theorem~4.10]{Ba}, such \(\overline{P}\) exists and is the smallest tensor ideal strictly containing \(P\).
  Thus \(P\) is a prime submodule.
\end{proof}

\begin{defn}[Matsui spectrum]
Let \(k\) be an \(E_\infty\)-ring, and let \(D\) be a \(k\)-linear stable \(\infty\)-category (i.e., a \(\perf(k)\)-module).
The ringed space
\begin{align*}
  \tSpec(D)&:=(\tSpc(D),\sO),\\
  \tSpc(D)&:=\Spc(D,\{\text{thick subcategories of }D\}),\\
  \sO&:=a\bigl(U\mapsto \Nat^0_k(\Id_{D_U},\Id_{D_U})\bigr)
\end{align*}
is called the \emph{Matsui spectrum} of \(D\).
\end{defn}
Note that there is a natural homeomorphic embedding \(\tSpec(D_0)\to \tSpec(D)\) from the original Matsui spectrum of the homotopy category \(D_0\) to our spectrum.

\begin{thm}[Comparison with the Matsui spectrum]\label{thm:comparison_with_the_Matsui_spectrum}
Let \(k\) be an \(E_\infty\)-ring and \((D,S,\perf(k))\) a datum.
Assume \(S\) is classically generated by \(1_S\).
Then there is an isomorphism
\[
\Spec(D,S,\perf(k))\cong \tSpec(D).
\]
\end{thm}
\begin{proof}
By assumption, all thick subcategories of \(D\) are \(S\)-submodules.
Hence their underlying spaces coincide, and so do the structure sheaves by definition.
\end{proof}

\section{Commutativity of Localizations}\label{section:commutativity_of_localizations}
In this section, we recall some fundamental properties concerning the commutativity of localizations.
These are well-known facts, so we omit the proofs; see, for example, \cite{Kr} or \cite[Proposition~10.5]{Sc} for details.

Let \(D\) be a presentable stable \(\infty\)-category.
A \emph{presentable thick subcategory} \(I\) of \(D\) is a thick subcategory such that \(I\) itself is presentable and the inclusion functor preserves small colimits.
An object \(d\in D\) is called \emph{\(I\)-local} if \(\Hom^*(I,d)=0\).
The Verdier quotient \(D/I\) is a presentable stable \(\infty\)-category, and the localization functor \(D \to D/I\), denoted by \(L_I\), has a fully faithful right adjoint.
Thus, we may identify \(D/I\) with the full subcategory of \(I\)-local objects in \(D\).

\begin{lem}\label{lem:composability_of_large_localization}
Let \(D\) be a presentable stable \(\infty\)-category, and let \(I,J\) be presentable thick subcategories.
Then the following conditions are equivalent:
\begin{enumerate}
\item For any \(x\in I\) and \(y\in J\), every morphism \(y\to x\) factors through some \(z\in I\cap J\).
\item \(L_I\circ L_J= L_{\ang{I,J}}\).
\item \(L_I\) preserves \(J\)-local objects.
\item \(L_J\) preserves \(I\).
\end{enumerate}
\end{lem}

\begin{lem}
Let \(D\) be a presentable stable \(\infty\)-category and \(I,J\) its presentable submodules.
Then, the following conditions are equivalent.
\begin{enumerate}
\item Both \((I,J)\) and \((J,I)\) satisfy the equivalent conditions of Lemma~\ref{lem:composability_of_large_localization}.
\item The following diagram is Cartesian:
\[\xymatrix{
D/(I\cap J)\ar[r]\ar[d]&  D/J\ar[d]\\
D/I\ar[r] & D/\ang{I,J}^{\mo{pr}}.
}\]
\item \(L_I\circ L_J= L_J\circ L_I\).
\end{enumerate}
\end{lem}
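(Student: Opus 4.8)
The plan is to run the implications $(1)\Rightarrow(3)$, $(3)\Rightarrow(1)$, $(1)\Rightarrow(2)$ and $(2)\Rightarrow(1)$, leaning throughout on the four characterizations of composability in Lemma~\ref{B-1}. The equivalence $(1)\Leftrightarrow(3)$ is the formal part. If both $(I,J)$ and $(J,I)$ are composable pairs, then Lemma~\ref{B-1}(2) applied to each ordered pair gives $L_I\circ L_J=L_{\ang{I,J}^{\mo{pr}}}=L_J\circ L_I$. Conversely, if $L_I\circ L_J=L_J\circ L_I$ and $y$ is $J$-local, then $L_J(L_I(y))\simeq L_I(L_J(y))\simeq L_I(y)$, so $L_I$ carries $J$-local objects to $J$-local objects; by Lemma~\ref{B-1}(3) the pair $(I,J)$ is composable, and the symmetric argument gives the same for $(J,I)$.

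For the comparison with the square in $(2)$ I would set $P=D/I\times_{D/\ang{I,J}^{\mo{pr}}}D/J$, the pullback taken in $\Prs$, write $u\colon D\to P$ for the colimit-preserving functor induced by $L_I$ and $L_J$ together with the canonical equivalence of their further localizations to $D/\ang{I,J}^{\mo{pr}}$, and let $v\colon D/(I\cap J)\to P$ be the factorization of $u$ through $L_{I\cap J}$ (here $I\cap J\simeq I\times_D J$ is again a presentable submodule, being closed under finite limits and all colimits, so $D/(I\cap J)$ makes sense). Condition $(2)$ is precisely the assertion that $v$ is an equivalence. Since $u$ preserves colimits into the presentable category $P$, it has a right adjoint $r$, and after identifying $D/I$, $D/J$, $D/\ang{I,J}^{\mo{pr}}$ with their images as full subcategories of local objects in $D$, this right adjoint is given by the fiber product $r(a,b,\phi)\simeq a\times_c b$ computed in $D$, where $c$ is the common image of $a$ and $b$ in $D/\ang{I,J}^{\mo{pr}}$ and the structure maps $a\to c$, $b\to c$ are the localization units.

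To prove $(1)\Rightarrow(2)$ I would compute $u\circ r$ and show it is the identity. Exactness of $L_I$ gives $L_I(a\times_c b)\simeq L_I(a)\times_{L_I(c)}L_I(b)$; here $L_I(a)\simeq a$ and $L_I(c)\simeq c$ since $a$ and $c$ are $I$-local, while $L_I(b)$ is $J$-local by composability of $(I,J)$ and $I$-local by construction, hence $\ang{I,J}^{\mo{pr}}$-local, and therefore equivalent to $c$ compatibly with the structure maps (using $L_{\ang{I,J}^{\mo{pr}}}\circ L_I=L_{\ang{I,J}^{\mo{pr}}}$). Thus $L_I\circ r\simeq\mathrm{id}$ and, symmetrically, $L_J\circ r\simeq\mathrm{id}$, so $u\circ r\simeq\mathrm{id}_P$; hence $r$ is fully faithful and $u$ exhibits $P$ as the Verdier quotient of $D$ by $\{x\mid L_I x\simeq 0\simeq L_J x\}=I\cap J$, so $v$ is an equivalence. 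For the converse $(2)\Rightarrow(1)$: if $v$ is an equivalence then $r$ is fully faithful and the fiber-product formula computes, along $D/(I\cap J)\simeq P$, the inclusion of $(I\cap J)$-local objects into $D$. Given $x\in I$, the cofiber sequence $F\to x\to L_{I\cap J}x$ has $F\in I\cap J\subset I$, so $L_{I\cap J}x\in I$ since $I$ is closed under cofibers; on the other hand $L_I x\simeq 0$ and $L_{\ang{I,J}^{\mo{pr}}}x\simeq 0$, so the formula forces $L_{I\cap J}x\simeq 0\times_0 L_J x\simeq L_J x$ in $D$, whence $L_J x\in I$. By Lemma~\ref{B-1}(4) the pair $(I,J)$ is composable, and exchanging $I$ and $J$ gives composability of $(J,I)$.

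The step I expect to be the main obstacle is $(1)\Rightarrow(2)$: making the fiber-product description of the right adjoint $r$ and the identity $u\circ r\simeq\mathrm{id}_P$ precise at the level of $\Prs$ rather than on objects, i.e. tracking coherently the units of the several localizations, the identifications of $D/I$, $D/J$, $D/\ang{I,J}^{\mo{pr}}$ with subcategories of local objects, and the limit diagram defining $P$. The remaining implications are either formal consequences of Lemma~\ref{B-1} or short arguments using that localization functors on stable categories are exact and that submodules are closed under cofibers.
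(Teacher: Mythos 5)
Your proof is correct. The equivalence $(1)\Leftrightarrow(3)$ is handled exactly as the paper does: $(1)\Rightarrow(3)$ by applying Lemma~\ref{B-1}(2) to both ordered pairs, $(3)\Rightarrow(1)$ by reading off Lemma~\ref{B-1}(3) from the commutation relation. The genuine divergence is in $(1)\Leftrightarrow(2)$. For $(1)\Rightarrow(2)$ the paper simply cites \cite[Proposition 10.5]{Sc}, whereas you give a self-contained argument: you observe that the pullback $P$ in $\Prs$ receives a colimit-preserving functor $u$ from $D$, hence $u$ has a right adjoint $r$; you identify $r(a,b,\phi)$ with the fiber product $a\times_c b$ computed in $D$ after replacing the three quotients by their subcategories of local objects; and you use composability (in the form that $L_I$ preserves $J$-local objects, so $L_I(b)\simeq c$) plus exactness of $L_I$, $L_J$ to get $u\circ r\simeq\mathrm{id}_P$, after which $P$ is recognized as the Verdier quotient $D/\ker u=D/(I\cap J)$. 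This buys you independence from the cited reference at the cost of having to track the right-adjoint formula carefully, which you correctly flag as the delicate step. For $(2)\Rightarrow(1)$ your argument and the paper's are essentially the same computation in different clothing: the paper first localizes at $I\cap J$ (invoking, without proof, that this reduction is harmless) and reads $x\simeq L_J(x)$ off the Cartesian square of objects for $x\in I$; you instead keep $I\cap J$ around, use the fiber-product description $L_{I\cap J}(x)\simeq L_I(x)\times_{L_{\ang{I,J}^{\mo{pr}}}(x)}L_J(x)$ directly, and conclude $L_J(x)\simeq L_{I\cap J}(x)\in I$ by the cofiber-sequence argument. Your version is marginally cleaner in that it avoids justifying the reduction step, and it dovetails naturally with the right-adjoint formula you already set up for the forward direction.
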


Let \(D\) be a (small) stable \(\infty\)-category.
The ind-completion is denoted by \(\Ind(D)\).
For a thick subcategory \(I\subset D\), the ind-completion \(\Ind(I)\) is a thick subcategory of \(\Ind(D)\), and there is a natural equivalence \(\Ind(D/I)\cong \Ind(D)/\Ind(I)\).

\begin{lem}\label{lem:composability_of_small_localization}
Let \(D\) be a stable \(\infty\)-category, and let \(I,J\subset D\) be thick subcategories.
Then the following conditions are equivalent:
\begin{enumerate}
\item For any \(x\in I\) and \(y\in J\), every morphism \(y\to x\) factors through some \(z\in I\cap J\).
\item The pair of presentable submodules \((\Ind~ I,\Ind~ J)\subset\Ind ~D\) satisfies the equivalent conditions of Lemma~\ref{lem:composability_of_large_localization}, and moreover \(\Ind ~I\cap \Ind ~J=\Ind(I\cap J)\).
\end{enumerate}
\end{lem}

\begin{lem}\label{lem:commutativity_of_small_localization}
Let \(D\) be a stable \(\infty\)-category, and let \(I,J\subset D\) be thick subcategories.
Then the following conditions are equivalent:
\begin{enumerate}
\item Both \((I,J)\) and \((J,I)\) satisfy the equivalent conditions of Lemma~\ref{lem:composability_of_small_localization}.
\item The following diagram is Cartesian:
\[
\xymatrix{
\Idem(D/(I\cap J))\ar[d]\ar[r]&\Idem(D/J)\ar[d]\\
\Idem(D/I)\ar[r]&\Idem(D/\ang{I,J}).
}
\]
\end{enumerate}
\end{lem}

\begin{defn}
Let \(D\) be a stable \(\infty\)-category, and let \(I,J\subset D\) be thick subcategories.
We say that \((I,J)\) is \emph{composable} (resp. \emph{commutative}) if it satisfies the equivalent conditions of Lemma~\ref{lem:composability_of_small_localization} (resp.~Lemma~\ref{lem:commutativity_of_small_localization}).
\end{defn}

\begin{lem}\label{lem:submodule_reconstruction}
Let \(D\) be a stable \(\infty\)-category, and let \(I,J,K\) be thick subcategories such that \((I,J)\) and \((I,K)\) are composable, and \(\Ind J\cap \Ind K=\Ind (J\cap K)\).  
Then \(\ang{I,J}\cap \ang{I,K}=\ang{I,J\cap K}\).
\end{lem}
\begin{proof}
The inclusion \(\ang{I,J\cap K}\subset \ang{I,J}\cap \ang{I,K}\) is clear.  
Let \(F\in (\ang{I,J}\cap \ang{I,K})/\ang{I,J\cap K}\subset \Ind D\).  
By composability, we have \(F\in \Ind J\cap \Ind K=\Ind(J\cap K)\),  
and hence \((\ang{I,J}\cap \ang{I,K})/\ang{I,J\cap K}=0\).
\end{proof}

\begin{prop}\label{prop:cartesian_diagram_of_submodules}
Let \((D,S)\) be a datum, and let \(I,J\) be submodules such that, for any submodule \(K\), both pairs \((K,I)\) and \((K,J)\) are composable.  
Then the following diagram is Cartesian:
\[
  \xymatrix{
    \Sub_S(\Idem(D/(I\cap J)))\ar[d]\ar[r]&\Sub_S(\Idem(D/J))\ar[d]\\
    \Sub_S(\Idem(D/I))\ar[r]&\Sub_S(\Idem(D/\ang{I,J})).
  }
\]
\end{prop}
\begin{proof}
Submodules of \(\Idem(D/I)\) correspond bijectively to submodules of \(D\) containing \(I\), and similarly for the other terms.  
Let \(P\) be the pullback of the above diagram.  
There is a natural map \(\Sub_S(\Idem(D/(I\cap J)))\to P\), which is injective by Lemma~\ref{lem:submodule_reconstruction}.  
Now take \(K\in \Sub_S(\Idem(D/I))\) and \(K'\in \Sub_S(\Idem(D/J))\) with the same image \(K''\) in \(\Sub_S(\Idem(D/\ang{I,J}))\).  
Then, by Lemma~\ref{lem:commutativity_of_small_localization}, the pullback \(K'''\) of \(K\to K''\gets K'\) is a submodule of \(\Idem(D/(I\cap J))\).  
For any \(k\in K\), there exists \(k'\in K'\) such that \(k\oplus k[-1]\) and \(k'\) have the same image in \(K''\) by \cite[Theorem~2.1]{Th}.  
This shows that the image of \(K'''\) in \(\Sub_S(\Idem(D/I))\) is \(K\), and similarly its image in \(\Sub_S(\Idem(D/J))\) is \(K'\).  
Hence surjectivity holds.
\end{proof}

Let \((D,S)\) be a datum and \(I\) an \(S\)-submodule of \(D\).
Let \(\Spc(D,S)_I\) denote the subspace of \(\Spc(D,S)\) consisting of primes \(P\) such that \(I\not\subset P\).

\begin{prop}\label{prop:composability_and_spaces}
Under the above assumptions, suppose that for any submodule \(J\) of \(D\), the pair \((J,I)\) is composable.
Then there is a bijection between the underlying sets:
\begin{align*}
\Spc(I,S)&\longrightarrow \Spc(D,S)_I,\\
\tilde{P}&\longmapsto \{a\in D\mid \Fib(a\to L_I(a))\in \Ind \tilde{P}\},\\
P\cap I&\mapsfrom P.
\end{align*}
The complement of \(\Spc(D,S)_I\) in \(\Spc(D,S)\) is \(\Spc(\Idem(D/I),S)\).
\end{prop}
\begin{proof}
The last assertion follows directly from the definition.
Let \(\iota:I\hookrightarrow D\) denote the natural inclusion.
Define a map
\[
\phi=(\iota^{-1},\ang{L_I}):\Sub_S(D)\longrightarrow \Sub_S(I)\times \Sub_S(\Idem(D/I)),\quad
J\longmapsto (\iota^{-1}(J), \Idem(\ang{J,I}_S/I)).
\]
Let \(J,K\subset D\) be submodules with \(J\subset K\).
We claim that \(J=K\) if and only if \(\phi(J)=\phi(K)\).
The “only if” direction is clear.
For the “if” direction, suppose \(\phi(J)=\phi(K)\).
Since the pair \((J, I)\) is composable, an object \(a \in \Ind(J)\) is \(\Ind(I)\)-local if and only if it is \(\Ind(J \cap I)\)-local.  
Therefore, in \(\Ind(D)\), the subcategory \(\Idem(\ang{J, I}_S/I)\) is identified with \(\Idem(J/(J \cap I))\), and similarly for \(K\).
The submodules of \(D\) containing \(J \cap I\) correspond bijectively to the submodules of \(\Idem(D/(J \cap I))\).  
Thus, the equalities \(J \cap I = K \cap I\) and \(\Idem(J/(J \cap I)) = \Idem(K/(K \cap I))\) together imply \(J = K\).

Next, let \(J\subset D\) and \(K\subset I\) be submodules such that \(J \cap I\subset K\).
We claim that \(\phi(\ang{J\cap K}_S)=(K,\Idem(\ang{J,I}_S/I))\).
The second component is clear.
The inclusion \(\ang{J,K}_S\cap I\supset K\) is obvious.
Define the functor \(\psi:D\to \Ind(I)\), \(a\mapsto \Fib(a\to L_I(a))\).
By composability, we have \(\psi(J)\subset\Ind(J\cap I)\subset \Ind(K)\).
Thus,
\[
\ang{J,K}_{S}\cap I
  \subset \psi(\ang{J,K}_{S})
  \subset \ang{\psi(J)\cup\psi(K)}_{S}
  = \ang{\psi(J)\cup K}_{S}
  \subset \Ind(K),
\]
which implies \(\ang{J,K}_S\cap I\subset \Ind(K)\cap I=K\).

Note that for a submodule \(J\subset D\), we have \(\ang{\psi(J)}^{\mo{pr}}_S=\Ind(J\cap I)\) by composability.  
Combining the above arguments with elementary set-theoretic considerations, we see that a submodule \(J\subset D\) with \(I\not\subset J\) is prime if and only if \(J\cap I\) is prime in \(I\) and \(J\) is maximal among submodules \(K\) satisfying \(K\cap I=J\cap I\).  
This maximality is equivalent to the condition \(J=\{a\in D \mid \psi(a)\in \Ind(J\cap I)\}\).
Hence, the correspondences in the statement are well defined and mutually inverse.
\end{proof}

\section{The Morphism to the Spectrum of the Base}\label{section:morphism_to_base}
In this section, we construct a morphism from the relative Matsui spectrum of a given datum to the spectrum of the base.

Let \((D,S)\) be a datum.
For a specialization-closed subset \(Z\subset \Spec(D,S)\), set
\[
S_Z=\{s\in S \mid \Supp(s)\subset Z\}, 
\quad D(Z)=\ang{S_Z\otimes D}.
\]

\begin{defn}
  \begin{enumerate}
    \item A datum \((D,S)\) is called \emph{admissible} if it satisfies the following conditions:
    \begin{enumerate}
      \item \(\Spec(S,S)\) is topologically noetherian.
      \item For any closed subset \(Z\subset \Spec(S,S)\) and any \(S\)-submodule \(I\subset D\), the pair \((I,D(Z))\) is composable.
      \item For any closed subsets \(Z,W\subset \Spec(S,S)\), we have \(D(Z\cap W)=D(Z)\cap D(W)\).
      \item For any family of closed subsets \(\{Z_i\}_i\subset \Spec(S,S)\), we have \(D(\bigcup_i Z_i)=\ang{\bigcup_i D(Z_i)}\).
    \end{enumerate}

    \item A datum \((D,S)\) is called \emph{smashing} if \(S\) is a commutative algebra, rigid, \(\oSpec(D_0)\) is topologically noetherian, and for any closed subset \(Z\subset \Spec(S,S)\) and any \(F\in D\), the natural morphism \(L_{S_Z}(1_S)\otimes F\to L_{D(Z)}(F)\) is an isomorphism.

    \item A datum \((D,S)\) is called \emph{rigid} if \(S\) is a commutative algebra, rigid, \(\oSpec(D_0)\) is topologically noetherian, and there exists a natural equivalence \(\Hom^0(s\otimes d, d')\cong \Hom^0(d,s^\vee \otimes d' )\) for all \(s\in S,\, d,d'\in D\), where \(s^\vee\) denotes the dual of \(s\).
  \end{enumerate}
\end{defn}

We will prove below that
\[
\textbf{rigid} \;\Rightarrow\; \textbf{smashing} \;\Rightarrow\; \textbf{admissible}.
\]

Let \((D,S)\) be a datum.
Define \(\fSpec(D,S)\) to be the ringed space with underlying set \(\Spec(D,S)\), whose family of closed subsets is generated by
\[
\{\Supp(s\otimes D)\mid s\in S\} \cup \{\Supp(d)\mid d\in D\}.
\]
The structure sheaf is defined in the same way as for \(\Spec(D,S)\).
There is a natural bijective morphism \(\fSpec(D,S)\to\Spec(D,S)\).
If \(D\) is classically generated by a single object \(d\), then \(\Supp(s\otimes D)=\Supp(s\otimes d)\), and hence this morphism is an isomorphism.

\begin{thm}[The morphism to the spectrum of the base]\label{thm:morphism_to_the_base}
Let \((D,S,B)\) be an admissible datum.
Then there exists a natural morphism
\[
\fSpec(D,S,B)\to \Spec(S,S,S), \quad 
P\longmapsto \{s\in S\mid s\otimes D\subset P\}.
\]
\end{thm}
\begin{proof}
  By Lemma~\ref{lem:base_prime_submodule}, the submodule \(\{s\in S\mid s\otimes D\subset P\}\subset S\) is prime, hence the morphism is well-defined.
  The continuity follows because the inverse image of closed sets of the form \(\Supp(s)\) is \(\Supp(s\otimes D)\).
  Let \(U\subset \Spec(S,S)\) be an open subset, and let \(V\subset \fSpec(D,S)\) be its inverse image.
  The induced morphism of structure sheaves is given by
  \[
  \Nat_S^0(\Id_{S_U},\Id_{S_U})
  \xrightarrow{\;\mo{ev}_{1_{S_U}}\;} \End^0(1_{S_U})
  \xrightarrow{\;1_{S_U}\otimes -\;} \Nat_B^0(\Id_{D_V},\Id_{D_V}),\quad
  \eta\mapsto \eta_{1_{S_U}}\otimes -.
  \]
\end{proof}

\begin{lem}\label{lem:base_prime_submodule}
  For \(P\in \Spec(D,S)\), the set \(\{s\in S\mid s\otimes D\subset P\}\) is a prime submodule of \(S\).
\end{lem}
\begin{proof}
  Put \(Y=\Spec(S,S)\).  
A specialization-closed subset \(Z\subset Y\) is said to satisfy condition (*) if \(D(Z)\not\subset P\).  
Let \(Z,W\subset Y\) be closed subsets satisfying (*).  
By Lemma~\ref{lem:submodule_reconstruction}, we have \(\ang{P,D(Z\cap W)}=\ang{P,D(Z)}\cap \ang{P,D(W)} \supsetneq P\), so \(Z\cap W\) also satisfies (*).

  Let \(\{Z_i\}_i\) be a family of specialization-closed subsets of \(Y\), and put \(Z=\bigcup_i Z_i\).
  Then \(Z\) satisfies (*) if and only if at least one \(Z_i\) does.
  The “if” part is immediate.
  For the “only if” part, use the hypothesis \(D(Z)=\ang{\bigcup_i D(Z_i)}\).

  Define \(W=\{Z\subset Y \mid Z \text{ closed and satisfies (*)}\}\).
  This set is non-empty since it contains \(Y\), and it is closed under finite intersections by the previous argument.
  As \(Y\) is noetherian, \(W\) has a smallest element \(Z_0\).
  We have the equality \(Z_0=\bigcup_{z\in Z_0}\ol{\{z\}}\).
  By the previous claim, there exists \(z_0\in Z_0\) such that \(\ol{\{z_0\}}\) satisfies (*).
  By minimality of \(Z_0\), it follows that \(Z_0=\ol{\{z_0\}}\).
  Since \(Y\) is a \(T_0\)-space, such a point \(z_0\) is unique.

  A specialization-closed subset \(Z\subset Y\) satisfies (*) if and only if it contains \(Z_0\), equivalently if it contains \(z_0\).
  Therefore,
  \[
  \{s\in S\mid s\otimes D\subset P\}
  =\{s\in S\mid z_0\not\in \Supp(s)\},
  \]
  which shows that the submodule is the prime corresponding to \(z_0\).
\end{proof}

\begin{cor}
Let \((D,S)\) be a smashing datum.
Then there is a natural morphism 
\[
\fSpec(D,S)\to \oSpec(S).
\]
\end{cor}
\begin{proof}
We have an isomorphism \(\oSpec(S)\cong \Spec(S,S,S)\).
\end{proof}

\begin{thm}\label{thm:local_decomposition}
Let \((D,S)\) be an admissible datum.
For specialization-closed subsets \(Z\subset W\subset \Spec(S,S)\), put \(D_{W\setminus Z}=\Idem(\ang{S_W\otimes D}/\ang{S_Z \otimes D})\).
Then there is a natural continuous bijection
\[
\Spc(D,S)_{W\setminus Z}\to \Spc(D_{W\setminus Z},S),
\]
where \(\Spc(D,S)_{W\setminus Z}\) denotes the set-theoretic fiber with the induced topology.
If \((D,S)\) is smashing and \(W\) is closed, then this map is a homeomorphism.
\end{thm}
\begin{proof}
The bijection follows from Proposition~\ref{prop:composability_and_spaces}.
Assume \((D,S)\) is smashing and \(W\) is closed.
By \cite[Proposition~2.14]{Ba}, there exists \(s\in S\) such that \(\ang{s}_S=S_W\).
For any \(a\in D\), we have
\(\Supp(a)\cap \Spc(D,S)_{W\setminus Z}=\Supp(s\otimes a)\).
Thus the map is closed, and hence a homeomorphism.
\end{proof}

\begin{thm}
Let \((D,S)\) be an admissible datum.  
Then the assignment
\[
\Spc(S,S)\supset U\ \longmapsto\ \Sub_S(D_U)
\]
defines a sheaf on \(\Spc(S,S)\).
\end{thm}

\begin{proof}
The claim follows directly from Proposition~\ref{prop:cartesian_diagram_of_submodules}.
\end{proof}

\begin{thm}[Restriction of the base]\label{thm:restriction_of_the_base}
Let \(B,S',S\) be algebra objects of \(\Perf\) with homomorphisms \(B\to S'\) and \(S'\to S\), and let \(D\) be a left \(S\)-module.
Assume \(S\) is commutative, \((D,S,B)\) is admissible, and \(S\) is \(\Spec(S,S)\)-locally classically generated by the image of \(S'\).
Then there is an embedding 
\[
\Spec(D,S)\to \Spec(D,S').
\]
If \(S\) itself is classically generated by the image of \(S'\), then the morphism is an isomorphism.
If \(D\) is a finitely generated \(S'\)-module, then the morphism is an open embedding.
\end{thm}
\begin{proof}
  First assume \(S\) is classically generated by the image of \(S'\).
  Then \(\Sub_S(D)=\Sub_{S'}(D)\), hence \(\Spec(D,S)\cong \Spec(D,S')\).

  In general, let \(\{U_i\}_i\) be an open covering of \(\Spec(S,S)\) such that \(S_{U_i}\) is classically generated by the image of \(S'\).
  Then
  \(\Spec(D_{U_i},S)\cong \Spec(D_{U_i},S_{U_i})\cong \Spec(D_{U_i},S')\),
  and we have embeddings \(\Spec(D_{U_i},S')\to \Spec(D,S')\).
  Since \(\Spec(D_{U_i},S)\) covers \(\Spec(D,S)\), these local maps glue to a global morphism \(\Spec(D,S)\to \Spec(D,S')\).
  As their closed subsets are generated by \(\{\Supp(a)\mid a\in D\}\), this map is continuous and is an embedding.

  Now assume \(D\) is a finitely generated \(S'\)-module.
  Let \(d_1,\ldots,d_n\) be generators and put \(d=d_1\oplus \cdots \oplus d_n\).
  Since the sets \(\Supp(s)\) for \(s\in S\) form a basis of closed subsets of \(\Spec(S,S)\), we may assume that \(U_i=\Spec(S,S)\setminus \Supp(s_i)\) with \(s_i\in S\).
  It suffices to prove that the embedding \(\Spec(D_{U_i},S')\to \Spec(D,S')\) is open.
  For any \(s\in S\),
  \[
  s\otimes s_i\otimes d
  \cong s_i\otimes s\otimes d
  \subset s_i\otimes s\otimes \ang{d}_{S'}
  =s_i\otimes s\otimes D
  \subset s_i\otimes D
  \subset \ang{s_i\otimes D}_{S'}.
  \]
  Therefore, \(\ang{I_{U_i}\otimes D}_{S}=\ang{s\otimes d}_{S'}\) is a finitely generated \(S'\)-module, and the embedding is open.
\end{proof}

\begin{prop}
Let \((D,S)\) be a datum.
\begin{enumerate}
\item If \((D,S)\) is rigid, then it is smashing.
\item If \((D,S)\) is smashing, then it is admissible.
\end{enumerate}
\end{prop}
\begin{proof}
(1) follows from \cite[Proposition~9]{Miller1992finite}.
For (2), we first prove the composability condition.
Let \(x\in D\), let \(Z\subset \Spec(S,S)\) be closed, \(S_Z\) the corresponding submodule of \(S\), and \(y\in D(Z)\).
Note that \((S,S)\) is smashing by (1).
Let \(1_Z=\Fib(1_S\to L_{S_Z}(1_S))\).
The object \(1_Z\) can be written as a filtered colimit \(\mathrm{colim}\, z_i\) with \(z_i\in S_Z\).
Take any morphism \(f:y\to x\).
Since the composite \(y\to x\to L_{D(Z)}(x)\) is zero, \(f\) factors through \(\Fib(x\to L_{D(Z)}(x))=1_Z\otimes x\).
As \(z\otimes x=\mathrm{colim}\, z_i\otimes x\), the morphism \(f\) factors through some \(z_i\otimes x\in\ang{x}_{S}\cap D(Z)\), as desired.

For any \(a\in D\), there is a fiber sequence
\[
1_Z\otimes a\to a\to L_{S_Z}(1_S)\otimes a=L_{S_Z}(a),
\]
so in particular \(a\in D(Z)\) if and only if \(1_Z\otimes a\cong a\).
Moreover, if there exists \(a\in \Ind~ S_Z\) such that \(a\otimes b\cong b\) for all \(b\in S_Z\), then \(a\cong 1_Z\).

Let \(Z,W\subset \Spec(S,S)\) be closed subsets.
For any \(a\in S_{Z\cap W}\), we have \(1_Z\otimes 1_W\otimes a\cong a\), and \(1_Z\otimes 1_W\in \Ind ~S_Z\cap \Ind~ S_W=\Ind (S_Z\cap S_W)=\Ind ~ S_{Z\cap W}\).
By uniqueness, it follows that \(1_Z\otimes 1_W\cong 1_{Z\cap W}\).
This implies \(D(Z)\cap D(W)=D(Z\cap W)\).

Let \(\{Z_i\}\) be a family of closed subsets of \(\Spec(S,S)\).
To prove \(D(\bigcup Z_i)=\ang{\bigcup D(Z_i)}\), it suffices to treat the case \(S=D\).
In this case, the claim follows from \cite[Theorem~4.10]{Ba}.
\end{proof}

\section{Applications}\label{section:applications}
In this section, we illustrate how the relative Matsui spectrum appears in familiar geometric settings. 
We focus on derived categories of perfect complexes, twisted derived categories, bounded derived categories of coherent sheaves, categories of singularities of a scheme, and derived matrix factorization categories.  

Throughout this section, we fix \(B=\perf(\mathbb{Z})\).

\subsection{Some properties for calculating relative Matsui spectra}
In this section, we establish several basic properties that will be useful in explicit calculations of relative Matsui spectra. 
We first study the behavior of submodules under change of algebras, and then prove descent-type results.

Let \((D,S)\) and \((E,T)\) be data, and let \(f_2\colon S \to T\) be a homomorphism of algebras, \(f_1\colon D \to E\) a morphism of \(S\)-modules and \(f=(f_1,f_2)\).
In this setting, we define two maps between \(\Sub_S(D)\) and \(\Sub_T(E)\):
\begin{align*}
\ang{f}_T\colon \Sub_S(D) &\longrightarrow \Sub_T(E),\quad I \longmapsto \ang{f_1(I)}_T,\\
f^{-1} \colon \Sub_T(E) &\longrightarrow \Sub_S(D),\quad J \longmapsto f_1^{-1}(J).
\end{align*}

\begin{lem}\label{lem:change_of_algebras}
In the above setting, the following conditions are equivalent:
\begin{enumerate}
\item \(\ang{f}_T\) is injective.
\item \(f^{-1}\) is surjective.
\item For every prime \(S\)-submodule \(P \subset D\), we have \(\ang{f}_T(P) \neq \ang{f}_T(\ol{P})\).
\item For every prime \(S\)-submodule \(P \subset D\), there exists a prime \(T\)-submodule \(Q \subset E\) such that \(f^{-1}(Q) = P\) and \(f^{-1}(\ol{Q}) \supsetneq P\).
\item The composite \(f^{-1} \circ \ang{f}_T\) is the identity map.
\end{enumerate}
\end{lem}

\begin{proof}
(5) clearly implies (1), and (1) implies (3).

Assume (3).
Let \(P \subset D\) be a prime \(S\)-submodule and take \(p \in \ol{P} \setminus P\).
Then \(f(p) \notin \ang{f}_T(P)\) by (3).
By Lemma~\ref{lem:existance_of_prime}, there exists a prime \(T\)-submodule \(Q \subset E\) such that \(\ang{f}_T(P) \subset Q\) and \(p \notin Q\).
Hence (4) holds.

Assume (4).
Let \(I \subset D\) be any \(S\)-submodule.
Then there exists a family \(\{P_i\}\) of prime \(S\)-submodules such that \(I = \bigcap P_i\).
By (4), each \(P_i\) is the inverse image of some prime \(Q_i \subset E\).
Thus \[f^{-1}\!\left( \bigcap Q_i \right) = \bigcap f^{-1}(Q_i) = \bigcap P_i = I,\] and hence (2) holds.

Assume (2).
Given any \(I \subset D\), there exists \(J \subset E\) such that \(f^{-1}(J) = I\).
Then clearly, \(f(I) \subset \ang{f}_T(I) \subset J\), so \[I \subset f^{-1}(\ang{f}_T(I)) \subset f^{-1}(J) = I.\]
Hence, \(f^{-1} \circ \ang{f}_T = \id\), so (5) holds.
\end{proof}

Similarly, if \(E,T\) are presentable stable \(\infty\)-categories and \(E\) is a \(T\)-module, \(\Sub_T^{\mo{pr}}(E)\) denotes the set of presentable \(T\)-submodules.
For a subset \(W\subset E\), \(\ang{W}_T^{\mo{pr}}\) denotes the presentable \(T\)-submodule generated by \(W\).
Let \((D,S)\) be a datum, \(f_2:\Ind ~S\to T\) a homomorphism of algebras, and \(f_1:\Ind~ D\to E\) a morphism of \(\Ind~ S\)-modules. 
In this setting, we define
\[
\ang{f}_T^{\mo{pr}}:\Sub_S(D)\longrightarrow \Sub_T^{\mo{pr}}(E),\quad 
I \longmapsto \ang{f_1(I)}_T^{\mo{pr}}.
\]
Note that \(\Sub_S(D)\) naturally embeds into \(\Sub_{\Ind S}(\Ind D)\) via \(I \mapsto \Ind I\).

\begin{lem}[Descent]\label{lem:descent}
  Let \((D,S)\) be a datum, and let \(S'\) be an algebra object with homomorphism \(\phi:S\to S'\).
  Assume \(\Ind(\phi)\) has an adjoint \(\psi\), that \(\psi(s')\otimes s\cong \psi(s'\otimes \phi(s))\) for all \(s\in S, s'\in S'\), and that
  \(\ang{\psi(1_{S'})\otimes d}^{\mo{pr}}_{\Ind S}=\ang{d}^{\mo{pr}}_{\Ind S}\subset \Ind(S\otimes D)\) for all \(d\in D\) (e.g., \(\ang{\psi(1_{S'})}^{\mo{pr}}_{\Ind S}=\Ind S\)).
  Then \((\phi\otimes \Id_D,\phi):(D,S)\to (S'\otimes_S D,S')\) satisfies the equivalent conditions of Lemma~\ref{lem:change_of_algebras}.

  Moreover, if the restriction \(\psi':=\psi|_{S'}\) factors through \(S\), and \(\ang{\psi'}_S:\Sub_{S'}(S'\otimes_S D)\to\Sub_S(D)\) is injective, then
  \(\ang{\phi\otimes \Id_D}_{S'}:\Sub_S(D)\to \Sub_{S'}(S'\otimes_S D)\) is bijective.
\end{lem}
\begin{proof}
  The assumptions imply that the composition 
  \begin{align*}
  \ang{\phi\otimes \Id_D}_{S'}&:\Sub_S(D)\to \Sub_{S'}(S'\otimes_S D),\quad I\mapsto \ang{\phi\otimes \Id_D(I)}_{S'}, \quad\text{with}\\
  \ang{\psi|_{S'}\otimes \Id_{D}}^{\mo{pr}}_{\Ind S}&:\Sub_{S'}(S'\otimes_S D)\to \Sub_{\Ind S}^{\mo{pr}}(\Ind D),\quad J\mapsto \ang{\psi\otimes \Id_{D}(J)}^{\mo{pr}}_{\Ind S},\end{align*}
  is the map \(I\mapsto \Ind I\).
  Since this is injective, so is \(\ang{\phi\otimes \Id_D}_{S'}\).

  In the case of the last assertion, \(\ang{\psi|_{S'}\otimes \Id_{D}}^{\mo{pr}}_{\Ind S}\) factors through \(\Sub_S(D)\); it is injective, and its composite with \(\ang{\phi\otimes \Id_D}_{S'}\) is the identity on \(\Sub_S(D)\).
  This implies the bijectivity of \(\ang{\phi\otimes \Id_D}_{S'}\).
\end{proof}

\begin{lem}\label{lem:functor_of_thick_subcategories}
Let \(S,S',T\) be commutative algebra objects in \(\Perf\) with homomorphisms \(S\to S'\) and \(S\to T\).
Let \(D\) be a left \(S'\)-module and \(D'\) a left \(T\)-module.
Let \(\Phi,\Psi:D\to D'\) be \(S\)-linear functors, and set \(T'=S'\otimes_S T\).
Assume \(\ang{\Ind \Phi}_{\Ind T'}^{\mo{pr}}=\ang{\Ind \Psi}_{\Ind T'}^{\mo{pr}}\) in \(\mo{Func}_{\Ind S}^{\ld}(\Ind D,\Ind D')\).
Then the induced maps \(\ang{\Phi_F}_{S'},\ang{\Phi_G}_{S'}:\Sub_S(D)\longrightarrow \Sub_T(D')\) are the same.
\end{lem}

\noindent
Note that \(\mo{Func}_{\Ind S}^{\ld}(\Ind D,\Ind D')\), the \(\infty\)-category of \(\Ind~ S\)-linear colimit-preserving functors, is a presentable stable \(\infty\)-category, and it carries a natural action of \(\Ind T'\).

\begin{proof}
Let \(F'\to F\to F''\) be an exact triangle in \(\mo{Func}_{\Ind S}^{\ld}(\Ind D,\Ind D')\).
Then, for any \(d\in D\), the sequence \(F'(d)\to F(d)\to F''(d)\) is an exact triangle.
Similarly, the construction preserves shifts and direct sums.
For any \(s\in S',\ t\in T,\ F\in \mo{Func}_{\Ind S}^{\ld}(\Ind D,\Ind D')\), and \(I\in \Sub_S(D)\), \[\ang{((s\otimes t)\otimes F)(I)}_T=\ang{t\otimes (F(s\otimes I))}_{T}\subset \ang{F(I)}_T.\]
This implies the desired equality. 
\end{proof}

\subsection{Derived categories of perfect complexes of schemes}
We recall that for any scheme \(X\), the triangulated category \(\perf(X)\) is classically finitely generated by \cite[Corollary 3.1.2]{BB}.
For a subset \(U \subset X\) of the form \(U = W \setminus Z\) with specialization-closed subsets \(W\) and \(Z\), we denote by \(\perf_U(X)\), \(\coh_U(X)\), etc., the corresponding categories as in Theorem \ref{thm:local_decomposition}.

\begin{lem}\label{lem:diagonal_of_Koszul_algebra}
Let \(R\) be a commutative ring, \(r_1,\ldots,r_n\in R\), and let 
\(A=\mo{Kos}(R;r_1,\ldots,r_n)\) be the Koszul algebra.
Then,
\[
\ang{A}_{A\dotimes_R A}^{\mo{pr}}
= \ang{A\dotimes_R A}_{A\dotimes_R A}^{\mo{pr}}
\quad \text{in } \qcoh(A\dotimes_R A).
\]
\end{lem}
\begin{proof}
Since \(\mo{Kos}(R; r_1,\ldots,r_n) \cong \bigotimes_{i=1}^n \mo{Kos}(R; r_i)\), it suffices to treat the case \(n=1\).
Put \(B=A\dotimes_R A\) and \(r=r_1\).  
Now, \(A\) is a dg-algebra of the form \(R\os{r}{\to} R\), and 
\(B\) is of the form \(R\to R\oplus R\to R\).  
The kernel of the diagonal map \(B\to A\) is \(A[1]\).  
Thus, there is a free resolution \(F_*=(\cdots \to B[2]\to B[1]\to B)\) of \(A\) in the category of chain complexes of dg-\(B\)-modules.  
The stupid truncation \(\tau_{\ge 1}F_*\) is a shift of a free resolution of \(A[1]\), and the cocone of the canonical map \(F_*\to \tau_{\ge 1}F_*\) is \(B\).  
This proves the assertion.
\end{proof}

\begin{lem}\label{lem:quasi_affine}
Let \(f:X\to Y\) be a quasi-affine morphism of schemes.  
Then, \(\perf(X)\) is classically generated by \(\ld f^*\perf(Y)\).
\end{lem}
\begin{proof}
It suffices to prove that \(\qcoh(X)\cong \Ind~\perf(X)\) is generated by \(\ld f^*\qcoh(Y)\) as a presentable thick subcategory.  
Let \(\{U_i\}_{i=1}^n\subset Y\) be an affine open covering, and set \(V_i=f^{-1}(U_i)\).  
For any \(F\in\qcoh(X)\), there exists an exact triangle 
\(F'\to F\to F''\) with 
\(F'\in \qcoh_{X\setminus V_1}(X)\subset \qcoh(\bigcup_{i=2}^n V_i)\) 
and \(F''\in \qcoh(V_1)\).  
Thus, by induction, we may assume \(n=1\).  
In this case, \(X\) is quasi-affine, and \(\qcoh(X)\) is generated by 
\(\sO_X=\ld f^*\sO_Y\).
\end{proof}

\begin{prop}\label{prop:schemes_are_rigid_datum}
Let \(f:X\to Y\) be a morphism of schemes.  
Then, \((\perf(X),\perf(Y))\) is a rigid datum.
\end{prop}
\begin{proof}
This follows from \cite[Theorem 6.3]{Ba} and the facts that 
\(\perf(X)\) is a rigid tensor triangulated category 
and \(\ld f^*\) preserves dual objects.
\end{proof}

\begin{thm}\label{thm:example_of_perfect_complexes}
Let \(f:X\to Y\) be a morphism of schemes, \((J,\perf(Y))\) a rigid datum, 
and \(\pi_J:\Spec(J,\perf(Y))\to Y\) the morphism of 
Theorem \ref{thm:morphism_to_the_base} with the canonical isomorphism 
\(\oSpec\Ho(\perf(Y))\cong Y\) (\cite{Ba}).  
Assume there is a \(\perf(Y)\)-linear equivalence 
\(\Phi:\perf(X)\to \perf(Y)\).  
Then the following statements hold.
\begin{enumerate}
  \item There is a canonical open embedding 
  \[\iota_{\Phi}:X\to \Spec(J,\perf(Y))\] 
  such that \(f\) decomposes as \(\pi_J\circ\iota_\Phi\).
  \item If \(X=Y\) and \(f=\Id_X\), then \(\Spec(J,\perf(X))\cong X\).
  \item Let \(g:Y\to Z\) be a morphism of schemes.  
  Then there is a canonical open embedding \(\Spec(J,\perf(Y))\to \Spec(J,\perf(Z))\).  
  If \(g\) is quasi-affine, then it is an isomorphism.
  \item If \(Y\) is quasi-affine, then \(\Spec(J,\perf(Y))\cong \tSpec(J)\).
  \item Let \(h:Y'\to Y\) be a morphism of schemes, \(W\subset Y\) the image of \(h\), \(X'=X\dotimes_Y Y'\), and \(\mo{pr}:X'\to X\) the projection.  
  Then, for any prime \(\perf(Y)\)-submodule \(P\subset \perf(X)\) in the set-theoretic inverse image \(\pi^{-1}_{\perf(X)}(W)\subset \Spec(\perf(X),\perf(Y))\), there exists a prime \(\perf(Y')\)-submodule \(Q\subset \perf(X')\) such that \((\ld \mo{pr}^*)^{-1}(Q)=P \).
  \item Let \(Y'\) be a simplicial \(Y\)-scheme locally isomorphic to the spectrum of a Koszul algebra of a localization of \(\sO_Y\).  
  Let \(X',\mo{pr},W\) be as above.  
  Then there is a canonical homeomorphism
  \[\Spc(\perf(X),\perf(Y))_{W}\cong \Spc(\perf(X'),\perf(Y')).\]  
  In particular, if \(y\in Y\) is a regular point such that \(f\) is flat over \(y\), then there is a homeomorphism \(\Spc(\perf(X),\perf(Y))_y\cong \tSpc(\perf(X_y)).\)
\end{enumerate}
\end{thm}
\begin{proof}
First, let us prove (2).  
We have a homeomorphism \(\Spc(\perf(X),\perf(X))\cong \oSpc(\perf(X))\) by Theorem \ref{thm:comparison_with_the_Balmer_spectrum} and an isomorphism \(\oSpec(\perf(X))\cong X\) (\cite[Theorem 6.3]{Ba}).  
By \cite{BFN}, for an affine open subset \(U\subset X\), there is an isomorphism
\begin{align*}
\Nat^0_{\perf(\intg)}(\perf(U),\perf(U))
&= \Nat^0_{\qcoh(\intg)}(\qcoh(U),\qcoh(U))\\
&=\Hom^0_{\qcoh(U\dotimes U)}(\sO_{\Delta},\sO_{\Delta})
= \Gamma(U,\sO_X).
\end{align*}
These isomorphisms give an isomorphism of structure sheaves.

(3) follows from Theorem \ref{thm:restriction_of_the_base} and Lemma \ref{lem:quasi_affine}.

For (1), the existence of the canonical open embedding follows from (2) and (3).  
The calculation of the composition \(\pi_J\circ \iota_\Phi\) is straightforward.

(4) follows from (3) in the quasi-affine case and from 
Theorem \ref{thm:comparison_with_the_Matsui_spectrum}.

For (5), after localizing, we may assume \(Y\) has a unique closed point \(s\), 
\(P\in\pi_{\perf(X)}^{-1}(s)\), and \(h\) is affine.  
By Theorem \ref{thm:local_decomposition}, we may replace \(\perf(X)\) by \(\perf_{\pi_{f^{-1}(s)}}(X)\).  
Then, by Lemma \ref{lem:descent}, it suffices to show that 
\(\ang{\rd f_* (\sO_{Y'})}_{\perf(Y)}^{\mo{pr}}\) contains \(\qcoh_{\{s\}}(Y)\).  
This follows from \cite{Ne}.

For (6), after localizing, we may assume \(Y\) is affine and \(Y'\) is a spectrum of a Koszul \(\sO_Y\)-algebra.  
By Lemma \ref{lem:descent} and the above discussion, it suffices to show that 
\(\ang{\ld \mo{pr}^*\rd \mo{pr}_*}_{\perf(X')}:
\Sub_{\perf(Y')}\perf(X')\to \Sub_{\perf(Y')}\perf(X')\) is bijective.  
The functors \(\ld \mo{pr}^*\rd \mo{pr}_*\) and \(\Id_{\perf(X')}\) are represented by
\[\sO_{\Delta_X}\boxtimes_Y (\sO_{Y'}\dotimes_{\sO_Y}\sO_{Y'}), 
\sO_{\Delta_X}\boxtimes_Y\sO_{\Delta_{Y'}} \in 
\qcoh(X\dotimes_{Y} X\dotimes_Y Y'\dotimes_Y Y')\cong \qcoh(X'\dotimes_{Y} X')\] as integral transforms, respectively.  
By Lemma \ref{lem:functor_of_thick_subcategories}, it is enough to prove 
\(\ang{\sO_{\Delta_{Y'}}}^{\mo{pr}}=\qcoh(Y'\dotimes_Y Y')\), 
which follows from Lemma \ref{lem:diagonal_of_Koszul_algebra}.
\end{proof}

\begin{rem}
It is known that the canonical embedding \(X \to \tSpec\perf(X)\) is an open embedding when \(X\) is quasi-affine~\cite[Corollary~4.7]{Ma2} or quasi-projective over an algebraically closed field~\cite[Theorem~3.2]{IM}.  
In fact, by Theorem~\ref{thm:example_of_perfect_complexes}, it is an open embedding whenever \(X\) is a (topologically) Noetherian scheme.  
Note that if one uses the original definition of Matsui spectra, \(X\) must be replaced by its reduced scheme \(X_{\mo{red}}\); see~\cite{Ma2}.
\end{rem}

\begin{cor}
  Let \(X\) be a scheme, and let \(f:Y\to X\) be a simplicial \(X\)-scheme locally isomorphic to the spectrum of a Koszul \(\sO_X\)-algebra \(\mo{Kos}(\sO_X;f_1,\ldots,f_n)\).
  Let \(I\) be the ideal sheaf locally generated by \(f_1,\ldots,f_n\), and let \(X_0\) be the closed subscheme \(\Spec(\sO_X/I)\subset X\).
  Then \(\Sub_{\perf(Y)}(\perf(Y))=\Sub_{\perf(X)}(\perf(Y))\), and this set is in bijection with specialization-closed subsets of \(X_0\).
  Moreover, \(X_0\cong \oSpec(Y)\cong \Spec(Y,\perf(X))\).
\end{cor}
\begin{proof}
By the proof of Theorem \ref{thm:example_of_perfect_complexes}, \(\Sub_{\perf(Y)}(\perf(Y))\) corresponds bijectively to \(\Sub_{\perf(X)}(\perf_{X_0}(X))\), and by \cite[Theorem 3.15]{Th}, \(\Sub_{\perf(X)}(\perf_{X_0}(X))\) corresponds bijectively to specialization-closed subsets of \(X_0\).
Since \(\perf(Y)\) is classically generated by \(\ld f^*\perf(X)\), we have \(\Sub_{\perf(Y)}(\perf(Y))=\Sub_{\perf(X)}(\perf(Y))\).
This classification yields the homeomorphisms \(X_0\cong \oSpc(Y)\cong \Spc(Y,\perf(X))\).
To prove the isomorphisms of structure sheaves, after localization we may assume that \(X\) is affine and \(Y\) is the spectrum of a Koszul \(\sO_X\)-algebra \(\mo{Kos}(\sO_X;f_1,\ldots,f_n)\), written as \(R\).
The natural isomorphism \(\End^0_{\perf(R)}(R)\cong \pi_0(R)\) gives the first isomorphism, and the second follows as in Theorem \ref{thm:example_of_perfect_complexes} (2).
\end{proof}

\begin{defn}
Let \((D,S)\) be a datum.
We say that a subset \(X\subset \Spec(D,S)\) is \emph{commutative} if, for any \(U,V\subset X\), the corresponding thick subcategories \(I_U,I_V\) commute.
\end{defn}

\begin{prop}\label{prop:maximal_commutative_subset}
Let \(X\to Y\) be a morphism of schemes, and let \(\iota:X\to \Spec(\perf(X),\perf(Y))\) be the natural open embedding.
Then the image of \(\iota\) is a maximal commutative subset.
\end{prop}

\begin{proof}
Since \((\perf(X),\perf(X))\) is a rigid datum, the image of \(\iota\) is commutative.  
We now prove maximality.  
Let \(I\subset \perf(X)\) be a thick subcategory commuting with \(I_U\) for any \(U\subset X\).  
Take a finite affine open covering \(\{U_i\}_i\) of \(X\), and set \(Z=\Supp(I)\cap X\) and \(Z_i=X\setminus U_i\).  
We then have \(\ang{\perf_{Z_i}(X)\cup I}=\perf_{Z_i\cup Z}(X)\),  
since submodules containing \(\perf_{Z_i}(X)\) correspond bijectively to submodules of \(\Idem(\perf(X)/\perf_{Z_i}(X))\cong \perf(U_i)\), by the classification~\cite[Theorem~3.15]{Th} together with the fact that, for an affine scheme, every thick subcategory is a tensor ideal.  
By Lemma~\ref{lem:submodule_reconstruction}, it follows that \(I=\perf_Z(X)\), which proves maximality.  
\end{proof}

\begin{prop}
Let \(Y\) be a scheme, and let \(X,X'\) be \(Y\)-schemes.
Suppose \(\Phi:\perf(X)\to \perf(X')\) is an equivalence of \(\perf(Y)\)-modules.
Define
\[
X(\Phi)=\{x\in X \mid \exists x'\in X' \ \text{such that}\ \Phi(P_x)=P_{x'}\},
\]
where \(P_x,P_{x'}\) are corresponding prime submodules.
Then \(X(\Phi)\) is an open subscheme of \(X\), and there is an isomorphism \(X(\Phi)\cong X'(\Phi^{-1})\).
Moreover, \(X(\Phi)=X\) if and only if \(X'(\Phi^{-1})=X'\).
\end{prop}

\begin{proof}
Let \(\iota_1,\iota_2:X,X'\to \Spec(\perf(X),\perf(Y))\) be the open embeddings associated with \(\Id_{\perf(X)}\) and \(\Phi^{-1}\), respectively.
Then we have \(X(\Phi)\cong \iota_1(X)\cap \iota_2(X')\cong X'(\Phi^{-1})\).
The last assertion follows from Proposition \ref{prop:maximal_commutative_subset}.
\end{proof}

\subsection{Twisted derived categories and Severi--Brauer schemes}

In this subsection, we examine twisted derived categories and Severi--Brauer schemes. 
We show that relative spectrum of twisted perfect complexes recovers the base scheme. 
For Severi--Brauer schemes, the spectrum contains the scheme itself together with countably many copies of the base. 
In relative dimension one, we obtain an isomorphism and classify submodules via specialization-closed subsets.

Let \(X\) be a scheme, and let  
\(\mu = (\mu_0, \mu_1) \in \etch^2(X, \mathbf{G}_m) \times \etch^1(X, \mathbb{Z})\).  
The corresponding twisted derived category \(\perf(X, \mu)\) is naturally a \(\perf(X,0)=\perf(X)\)-module; see \cite{To2} for the definition.  
There is a canonical isomorphism
\[
\perf(X, \mu) \otimes_{\perf(X)} \perf(X, \mu') \;\cong\; \perf(X, \mu + \mu').
\]
When \(\mu_1=0\), the category \(\perf(X, \mu)\) is called the derived category of perfect complexes of twisted sheaves; see \cite{Ca} for details.

\begin{prop}
Let \(X\) be a scheme, \(\mu\in \etch^2(X,\mathbf{G}_m)\times \etch^1(X,\mathbf{Z})\).
Then \((\perf(X,\mu),\perf(X))\) is a rigid datum.
\end{prop}

\begin{proof}
The argument is analogous to Proposition \ref{prop:schemes_are_rigid_datum}.
\end{proof}

\begin{thm}
Let \(X\) be a scheme and \(\mu\in \etch^2(X,\mathbf{G}_m)\times \etch^1(X,\mathbf{Z})\).  
Consider the morphism
\[
\pi:\Spec(\perf(X,\mu),\perf(X))\to X
\]
of Theorem \ref{thm:morphism_to_the_base}, equipped with the canonical isomorphism 
\(\oSpec\Ho(\perf(X))\cong X\).
Then \(\pi\) is an isomorphism.
\end{thm}

\begin{proof}
Let \(F\in \perf(X,\mu)\) and \(F'\in \perf(X,-\mu)\) be classical generators, whose existence is shown in \cite{To2}.
Then \(F\otimes F'\) and \(F'\otimes F\) are classical generators of \(\perf(X)\).
Hence the maps
\[
\Sub_{\perf(X)}(\perf(X,\mu))\to \Sub_{\perf(X)}(\perf(X)),\quad
J\mapsto \ang{F'\otimes J}_{\perf(X)},
\]
and
\[
\Sub_{\perf(X)}(\perf(X))\to \Sub_{\perf(X)}(\perf(X,\mu)),\quad
I\mapsto \ang{F\otimes I}_{\perf(X)},
\]
are inverse to each other, and thus bijections.
Therefore, \(\pi\) is a homeomorphism.

Next, we show the isomorphism of structure sheaves.  
After localization, we may assume that \(X\) is an affine scheme \(\Spec R\).  
Put \(A=\sNat_{\perf(\mathbb{Z})}(\Id_{\perf(X,\mu)},\perf(X,\mu))\), where \(\sNat\) denotes the spectrum of natural transformations.  
For affine étale morphisms \(U\cong \Spec R' \to X\), the assignments
\[
U\longmapsto \sNat_{\perf(\mathbb{Z})}(\Id_{\perf(U,\mu|_U)},\perf(U,\mu|_U)), 
\quad
U\longmapsto \rd \Gamma(U,\sO_U)
\]
are given by \((R'\dotimes_{\mathbb{Z}} R')\dotimes_{R\dotimes_{\mathbb{Z}}R}A\) and \(R' \cong (R'\dotimes_{\mathbb{Z}}R')\dotimes_{R\dotimes_{\mathbb{Z}}R}R\), respectively.  
Since \(\mu\) is étale locally trivial, their \(\pi_0\) are locally isomorphic, as shown in Theorem~\ref{thm:example_of_perfect_complexes}.  
The homomorphism \(R\otimes_{\mathbb{Z}}R \to R'\otimes_{\mathbb{Z}}R'\) is flat in the sense of \cite{Lu2}, hence the collections of these \(\pi_0\)'s give rise to usual quasi-coherent sheaves.
By descent, they are isomorphic.
\end{proof}

\begin{prop}\label{prop:Severi-Brauer_scheme}
Let \(X\) be a scheme, and let \(Y\to X\) be a Severi--Brauer scheme of relative dimension \(r>0\).
Then there is a natural open embedding
\[
Y\;\amalg\; \left(\amalg_\mathbb{Z} X\right)\;\longrightarrow\; \Spec(\perf(Y),\perf(X)).
\]
\end{prop}

\begin{proof}
The embedding from \(Y\) follows from Theorem~\ref{thm:example_of_perfect_complexes}.
Let \(\mu \in \etch^2(X, \mathbf{G}_m)\) be the class corresponding to \(Y\).
By \cite{Be}, for any integer \(k\), the category \(\perf(Y)\) admits a semi-orthogonal decomposition of the form
\((\perf(X, Y)_k, \ldots, \perf(X, Y)_{k+r})\).
Each component \(\perf(X, Y)_k\) is equivalent to \(\perf(X, k\mu)\), and each is classically finitely generated by \cite{To2}.
This yields an open embedding
\begin{align*}
\iota_k : X &\cong \Spec(\perf(X, k\mu), \perf(X))\cong \Spec(\perf(X,Y)_k,\perf(X)) \\
&\cong \Spec\!\left(\Idem\!\left(\perf(Y) / \ang{\perf(X, Y)_{k+1}, \ldots, \perf(X, Y)_{k+r}}\right), \perf(X)\right)\\
&\to \Spec(\perf(Y), \perf(X)).
\end{align*}
For each \(k\), the image of \(\iota_k\) is disjoint from \(Y\), since \(Y\subset \Supp(\perf(X,Y)_{k+1})\) while \(\iota_k(X)\cap \Supp(\perf(X,Y)_{k+1})=\emptyset\). 
Similarly, the images of \(\iota_k\) and \(\iota_{k+1}\) are disjoint.
\end{proof}

\begin{thm}
Let \(X\) be a scheme, and let \(Y\to X\) be a Severi--Brauer scheme of relative dimension \(1\).
Then there is an isomorphism
\[
Y\;\amalg\; \left(\amalg_\mathbb{Z} X\right)\;\longrightarrow\; \Spec(\perf(Y),\perf(X)).
\]
Moreover, the set of submodules \(\Sub_{\perf(X)}(\perf(Y))\) is in bijection with subsets \(Z\subset \Spec(\perf(Y),\perf(X))\) satisfying:
\begin{enumerate}
\item \(Z\) is specialization-closed.
\item For each \(x\in X\), the fiber \(Z_x\) is one of the following:
\begin{enumerate}
\item the empty set,
\item there exists a non-empty specialization-closed subset \(W\subset Y_x\) such that \(Z_x=W\;\amalg\;\bigl(\amalg_{\mathbb{Z}}\{x\}\bigr)\), or
\item there exists some \(n\in\mathbb{Z}\) such that \(Z_x=Y_x\;\amalg\;\bigl(\amalg_{\mathbb{Z}\setminus\{n\}}\{x\}\bigr)\).
\end{enumerate}
\end{enumerate}
\end{thm}

\begin{proof}
If \(X\) is the spectrum of an algebraically closed field, then \(Y\) is a projective line, and the claim follows from \cite{KS}.
In general, the morphism is given by Proposition \ref{prop:Severi-Brauer_scheme}, and it is an isomorphism by Theorem \ref{thm:example_of_perfect_complexes} (5), taking \(Y'\) to be the spectrum of an algebraically closed field.

Next, we classify submodules of \(\perf(Y)\).
The correspondence is given by the support map, which is always injective by Theorem \ref{thm:classification_of_submodules}.
Condition (1) is standard.
For (2), let \(s = \Spec~ \overline{k}\) be the spectrum of an algebraically closed field, and take a morphism \(s \to X\).
Define
\[
\phi:\tSpec(\perf(Y_{s}))\;\cong\; Y_{s}\;\amalg\;\left(\amalg_\mathbb{Z} \Spec\ s\right)\;\longrightarrow\; Y\;\amalg\;\left(\amalg_{\mathbb{Z}}X\right)\;\cong\;\Spec(\perf(Y),\perf(X)).
\]
Then the following diagram commutes:
\[
\xymatrix{
\Sub(\perf(Y_{s}))\ar[d]_{\Supp}&\Sub_{\perf(X)}(\perf(Y))\ar[l]_{\text{derived pullback}}\ar[d]^{\Supp}\\
\{Z'\subset \Spec(\perf(Y_{s}))\}& \ar[l]^{\phi^{-1}}\{Z\subset \Spec(\perf(Y),\perf(X))\}.}
\]
Thus, the necessity of (2) follows.

Now, to verify existence: given a subset \(Z\) satisfying (1) and (2), set
\begin{align*}
U&=\{x\in X\mid Z_x \text{ is of type (b)}\},\quad
U'=Z\cap Y_U,\\
V_m&=\{x\in X\mid Z_x \text{ is of type (c) with }n=m,\ \text{or of type (b) with }W=Y_x\}.
\end{align*}
Here \(U,V_m\) are specialization-closed subsets of \(X\), and \(U'\) is specialization-closed in \(Y\).
Define \(I=\ang{\perf_{U'}(Y)\cup \bigcup_{m\in\mathbb{Z}}\ \bigcup_{a\in\perf_{V_m}(X)} a\otimes\perf(X,Y)_{m+1}}\).
It is straightforward to check that \(\Supp(I)=Z\).
\end{proof}

\subsection{Koszul algebras}\label{subsec:Koszul_algebras}

Liu and Pollitz~\cite{liu2025classifying} gave a beautiful classification of thick subcategories over Koszul algebras of regular rings.  
Our construction generalizes their results to the setting of regular schemes.

For a graded commutative ring \(A\), let \(\Spec^{\mo{gr}} A\) denote the set of homogeneous prime ideals of \(A\) with the usual topology.  
There is a bijection \(\Spec^{\mo{gr}} A \cong \mo{Proj}(A) \coprod \Spec(A_0)\).  
Let \(R\) be a commutative ring, and let \(f_1,\ldots,f_n \in R\).  
Set \(E = \mo{Kos}(R; f_1,\ldots,f_n)\), the Koszul \(R\)-algebra.  
Let \(A = R[x_1,\ldots,x_n]\), where each \(x_i\) is assigned homological degree \(-2\).
Define \(\Supp^{\mo{coh}}(\Spec E)\) and \(\Supp^{\mo{sing}}(\Spec E)\) to be the subspaces of \(\Spec^{\mo{gr}}(A)\), denoted \(\Supp(A,w)\) and \(\Sing(\mo{Proj}(A/(w)))\) in~\cite{liu2025classifying}, respectively.  
There is a bijection
\[
\Supp^{\mo{coh}}(\Spec E) = \Supp^{\mo{sing}}(\Spec E) \coprod \Spec R/(f_1,\ldots,f_n).
\]
For a point \(x\in \Spec E\) (identified with \(\Spec R/(f_1,\ldots,f_n)\) as a topological space), the topological fiber \(\Supp(\Spec E)_x\) is given by
\[
\Supp(\Spec E)_x \cong \Spec~ \mo{Sym}_{k(x)}(k(x)^{c(x)}[-2]),
\]
where \(c(x) = n - \bigl(\dim(\sO_{\Spec R,x}) - \mo{embdim}(\sO_{R/(f_1,\ldots,f_n),x})\bigr)\).

Let \(Y\) be a simplicial scheme locally isomorphic to the spectrum of a Koszul algebra \(\mo{Kos}(R;f_1,\ldots,f_n)\) of a regular ring \(R\).  
Denote by \(\mo{D}^f(Y)\) the full subcategory of \(\qcoh(Y)\) consisting of bounded complexes with \(\pi_0(\sO_Y)\)-coherent cohomology.  
By gluing together the spaces \(\Supp^{\mo{coh}}(\Spec E)\) (resp.~\(\Supp^{\mo{sing}}(\Spec E)\)), one obtains the topological space \(\Supp^{\mo{coh}}(Y)\) (resp.~\(\Supp^{\mo{sing}}(Y)\)) over \(Y\).

\begin{prop}
The action of \(\perf(Y)\) on \(\mo{D}^f(Y)\) is rigid.
\end{prop}

\begin{thm}
In the above setting, there is a bijection
\[
\Spc(\mo{D}^f(Y),\perf(Y)) \cong \Supp^{\mo{coh}}(Y).
\]
The family of closed subsets of \(\Spc(\mo{D}^f(Y),\perf(Y))\) is generated by those subsets \(Z \subset \Spc(\mo{D}^f(Y),\perf(Y))\) that are locally closed subspaces of \(\Spec(A)\).  
Submodules of \(\mo{D}^f(Y/X)\) correspond bijectively to specialization-closed subsets of \(\Supp^{\mo{coh}}(Y)\).
\end{thm}

\begin{proof}
If \(X\) is affine, this follows from~\cite{liu2025classifying}.  
For general \(X\), one patches the local results.
\end{proof}

If \(f_1,\ldots,f_n\) form a regular sequence, then \(Y\) is a classical scheme which is locally a complete intersection.  
In this case, \(\mo{D}^f(Y)=\coh(Y)\), and explicit arguments are given in the next subsection.

\subsection{The categories related to singularities}

In this subsection, we investigate the relative spectra associated with categories of singularities and coherent sheaves. 
We establish comparison diagrams and characterize fibers in terms of regularity, complete intersections, and hypersurface singularities. 
This provides a geometric interpretation of singular loci via the spectra of these categories.

A topological space \(S\) is called \emph{local} if \(S\) has a unique closed point \(s\), and every point \(s'\in S\) is a generalization of \(s\).

The embedding codimension \(\mo{ecodim}(R)\) of a local ring \(R\) is defined as the difference between its embedding dimension and its Krull dimension.  
A local complete intersection ring is called a \emph{hypersurface} if its embedding codimension is \(0\) or \(1\).

\begin{thm}\label{2-19}
Let \(X\to Y\) and \(Y\to Y'\) be morphisms of schemes.
Then there exists the following commutative diagram:
\[
\xymatrix{
\Spec^{\mo{fin}}(\Sing(X),\perf(Y))\ar[r]^s\ar[d]^{b'}&
\Spec^{\mo{fin}}(\coh(X),\perf(Y))\ar[d]^b\ar[r]&
Y\ar[d]\\
\Spec^{\mo{fin}}(\Sing(X),\perf(Y'))\ar[r]^{s'}&
\Spec^{\mo{fin}}(\coh(X),\perf(Y'))\ar[r]&
Y'}.
\]
If \(\coh(X)\) is finitely generated (e.g., when \(X\) is a separated scheme of finite type over a perfect field \cite[Theorem 7.39]{Ro}), then \(b,b'\) are open embeddings; otherwise they are injections.
The maps \(s,s'\) are always open embeddings.

Put \(X^{\mo{coh}}=\Spc(\coh(X),\perf(X))\) and \(X^{\mo{sing}}=\Spc(\Sing(X),\perf(X))\).
Fix a point \(x\in X\).
Then the following statements hold:
\begin{enumerate}
\item The following are equivalent: \(X\) is regular at \(x\); the fiber \((X^{\mo{coh}})_x\) consists of a single point; and the fiber \((X^{\mo{sing}})_x\) is empty.
\item \(X\) is a complete intersection at \(x\) if and only if the space \((X^{\mo{coh}})_x\) is local.
\item If \(X\) is locally a complete intersection, then there are bijections
\begin{align*}
X^{\mo{coh}} &\cong \Supp^{\mo{coh}}(X)\cong \Supp^{\mo{sing}}(X)\coprod X,\\
X^{\mo{sing}} &\cong \Supp^{\mo{sing}}(X),
\end{align*}
where \(\Supp^{\mo{coh}}(X)\) and \(\Supp^{\mo{sing}}(X)\) are the spaces defined in the previous subsection.  
In particular, there is a canonical closed embedding \(X \hookrightarrow X^{\mo{coh}}\).
Submodules of \(\coh(X)\) and \(\Sing(X)\) correspond bijectively to specialization-closed subsets of \(\Supp^{\mo{coh}}(X)\) and \(\Supp^{\mo{sing}}(X)\), respectively.  
For a point \(x\in X\), there are bijections
\[
(X^{\mo{coh}})_x \cong \mathbb{P}_{k(x)}^{\mo{ecodim}(\sO_{X,x})-1}\coprod \{x\},\quad 
(X^{\mo{sing}})_x \cong \mathbb{P}_{k(x)}^{\mo{ecodim}(\sO_{X,x})-1}.
\]

\item If \(X\) is a hypersurface at every point, then there is a homeomorphism
\[
X^{\mo{sing}}\cong \{\text{singular locus of }X\}.
\]
\end{enumerate}
\end{thm}

\begin{proof}
The maps \(s,s'\) of underlying sets are constructed as in Proposition \ref{prop:open_embedding}, and \(b,b'\) as in Theorem \ref{thm:restriction_of_the_base}.  
Commutativity follows from the naturality of these constructions.  
Continuity of \(s,s',b,b'\) also follows, since the topology on \(\Spec^{\mo{fin}}\) is the weakest one containing that of \(\Spec\) and making the maps to the spectrum of the base continuous.

(1) If \(X\) is regular at \(x\), the claim is clear.
Suppose \(X\) is not regular at \(x\).
Then \(\coh_x(X)\) has at least three distinct submodules: \(0\), \(\perf_x(X)\), and \(\coh_x(X)\).
By Theorem \ref{thm:classification_of_submodules}, \(\Spc(\coh_x(X))\) must have at least \(\lceil\log_2 3\rceil=2\) points.
Since \(\Sing_x(X)\) is nontrivial, Corollary \ref{cor:empty_spectrum} implies \(X^{\mo{sing}}\neq\emptyset\).

(2) This follows directly from \cite{Po}; see also Remark \ref{rem:T_0_ness}.

(3) follows from the previous subsection.  
Note that the topology on \(X^{\mo{coh}}\) is generated by the topology on \(\Spc(\coh(X),\perf(X))\) (as explained in detail in the previous subsection) together with the inverse images of open subsets of \(X\).

(4) After localization, assume \(X\) is affine.
Then the statement follows from \cite[Corollary 3.8]{Ma1}.
\end{proof}

\subsection{Derived matrix factorization categories}

Let \(X\) be a scheme, \(L\) a line bundle on \(X\), and \(W \in \Gamma(X, L)\).
The derived matrix factorization category \(\mathrm{DMF}(X, L, W)\) was introduced by Positselski~\cite{Pos}, \cite{EP}.  
There is a natural action of \(\mathrm{DMF}(X, L, 0)\) on \(\mathrm{DMF}(X, L, W)\).  

Let \(\iota:Z \to X\) be a closed subscheme.  
Hirano~\cite{Hi} introduced the relative singular locus \(\mathrm{Sing}(Z/X)\) by
\[
\mathrm{Sing}(Z/X)
= \{\, z\in Z \mid \exists F\ \text{coherent sheaf on } Z, \;\;
F_z\notin\perf(\sO_{Z,z}),\ \iota_*F\in\perf(X)\,\}.
\]
When \(2\) is invertible on \(X\) and \(W\) is a non-zero-divisor, he reconstructed the relative singular locus of the zero scheme \(X_0\) of \(W\) using the Balmer spectrum with a certain tensor structure.  
Our construction extends this result.

The author gratefully acknowledges Hirano and Ouchi, from whom he learned this idea.

\begin{thm}
Assume that \(X\) is separated and admits an ample family of line bundles.  
If \(W\) is a non-zero-divisor, then there is a homeomorphism
\[
\Spc^{\mo{fin}}(\mathrm{DMF}(X, L, W), \mathrm{DMF}(X, L, 0)) \cong \mathrm{Sing}(X_0/X).
\]
\end{thm}
\begin{proof}
This follows from Hirano's classification of \(\mathrm{DMF}(X, L, 0)\)-submodules~\cite{Hi}.
\end{proof}

Note that the underlying set of \(\Spc(\mathrm{DMF}(X, L, W), \mathrm{DMF}(X, L, 0))\) is the same, but its family of closed subsets is generated by \(\{\, Z \subset \mathrm{Sing}(X_0/X) \mid Z \text{ is closed in } X \,\}\).

\section*{Acknowledgements}

I am deeply grateful to Seidai Yasuda, Takumi Asano, and Kango Ito for their invaluable support during the preparation of this paper.  
I also thank Yuki Hirano, Daigo Ito, Hiroki Matsui, and Genki Ouchi for their helpful advice and insightful comments.

\bibliographystyle{amsalpha}
\bibliography{biblatex}

\providecommand{\bysame}{\leavevmode\hbox to3em{\hrulefill}\thinspace}
\providecommand{\MR}{\relax\ifhmode\unskip\space\fi MR }
\providecommand{\MRhref}[2]{%
  \href{http://www.ams.org/mathscinet-getitem?mr=#1}{#2}
}
\providecommand{\href}[2]{#2}
\begin{thebibliography}{BZFN10}

\bibitem[Bal05]{Ba}
Paul Balmer, \emph{The spectrum of prime ideals in tensor triangulated categories}, J. Reine Angew. Math. \textbf{588} (2005), 149--168.

\bibitem[BB03]{BB}
Alexei Bondal and Michel Van~den Bergh, \emph{Generators and representability of functors in commutative and noncommutative geometry}, Mosc. Math. J. \textbf{3} (2003), no.~1, 1--36.

\bibitem[Ber09]{Be}
Marcello Bernardara, \emph{A semiorthogonal decomposition for {Brauer-Severi} schemes}, Math. Nachr. \textbf{282} (2009), no.~10, 1406--1413.

\bibitem[BN93]{BN}
Marcel B{\"o}kstedt and Amnon Neeman, \emph{Homotopy limits in triangulated categories}, Compos. Math. \textbf{86} (1993), 209--234.

\bibitem[BZFN10]{BFN}
David Ben-Zvi, John Francis, and David Nadler, \emph{Integral transforms and {Drinfeld} centers in derived algebraic geometry}, J. Amer. Math. Soc. \textbf{23} (2010), no.~4, 909--966.

\bibitem[Cal00]{Ca}
Andrei~Horia Caldararu, \emph{Derived categories of twisted sheaves on {Calabi-Yau} manifolds}, Cornell University, 2000.

\bibitem[EP15]{EP}
Alexander~I. Efimov and Leonid Positselski, \emph{Coherent analogues of matrix factorizations and relative singularity categories}, Algebra Number Theory \textbf{9} (2015), no.~5, 1159--1292. \MR{3366002}

\bibitem[Hir19]{Hi}
Yuki Hirano, \emph{Relative singular locus and {B}almer spectrum of matrix factorizations}, Trans. Amer. Math. Soc. \textbf{371} (2019), no.~7, 4993--5021. \MR{3934475}

\bibitem[HKO24]{HKO}
Yuki Hirano, Martin Kalck, and Genki Ouchi, \emph{Length of triangulated categories}, arXiv:2404.07583 (2024).

\bibitem[HO22]{HO}
Yuki Hirano and Genki Ouchi, \emph{Prime thick subcategories on elliptic curves}, Pacific J. Math. \textbf{318} (2022), no.~1, 69--88.

\bibitem[HO24]{HO2}
\bysame, \emph{{Fourier-Mukai} loci of {K3} surfaces of picard number one}, arXiv:2405.01169 (2024).

\bibitem[IM24]{IM}
Daigo Ito and Hiroki Matsui, \emph{A new proof of the {Bondal-Orlov} reconstruction using matsui spectra}, arXiv:2405.16776 (2024).

\bibitem[Ito23]{It}
Daigo Ito, \emph{Gluing of {Fourier-Mukai} partners in a triangular spectrum and birational geometry}, arXiv:2309.08147 (2023).

\bibitem[Kra23]{Kr}
Henning Krause, \emph{Central support for triangulated categories}, Int. Math. Res. Not. IMRN (2023), no.~22, 19773--19800. \MR{4669814}

\bibitem[KS19]{KS}
Henning Krause and Greg Stevenson, \emph{The derived category of the projective line}, Spectral Structures and Topological Spectral Structures and Topological Methods in Mathematics (2019), 275--298.

\bibitem[LP25]{liu2025classifying}
Jian Liu and Josh Pollitz, \emph{Classifying thick subcategories over a koszul complex via the curved bgg correspondence}, arXiv preprint arXiv:2502.13806 (2025).

\bibitem[Lur09]{Lu1}
Jacob Lurie, \emph{Higher topos theory}, Annals of Mathematics Studies \textbf{170} (2009).

\bibitem[Lur17]{Lu2}
\bysame, \emph{Higher algebra}.

\bibitem[Mat21]{Ma1}
Hiroki Matsui, \emph{Prime thick subcategories and spectra of derived and singularity categories of noetherian schemes}, Pacific J. Math. \textbf{313} (2021), no.~2, 433--457.

\bibitem[Mat23]{Ma2}
\bysame, \emph{Triangular spectra and their applications to derived categories of noetherian schemes}, arXiv:2301.03168 (2023).

\bibitem[Mil92]{Miller1992finite}
Haynes Miller, \emph{Finite localizations}, Bol. Soc. Mat. Mexicana (2) \textbf{37} (1992), no.~1-2, 383--389.

\bibitem[MT18]{MT}
Hiroki Matsui and Ryo Takahashi, \emph{Construction of spectra of triangulated categories without tensor structure and applications to commutative rings}, arXiv:1811.06312 (2018).

\bibitem[Nee92]{Ne}
Amnon Neeman, \emph{The chromatic tower for \(d(r)\)}, Topology \textbf{31} (1992), 519--532.

\bibitem[Pol19]{Po}
Josh Pollitz, \emph{The derived category of a locally complete intersection}, Adv. Math. \textbf{354} (2019), 106752.

\bibitem[Pos11]{Pos}
Leonid Positselski, \emph{Two kinds of derived categories, {K}oszul duality, and comodule-contramodule correspondence}, Mem. Amer. Math. Soc. \textbf{212} (2011), no.~996, vi+133. \MR{2830562}

\bibitem[Ros96]{Ro}
Alexander~L Rosenberg, \emph{Reconstruction of schemes}, MPI Preprints Series \textbf{108} (1996).

\bibitem[Sch19]{Sc}
Peter Scholze, \emph{Lectures on condensed mathematics}.

\bibitem[Tho97]{Th}
Robert~W Thomason, \emph{The classification of triangulated subcategories}, Compos. Math. \textbf{105} (1997), no.~1, 1--27.

\bibitem[To{\"e}12]{To2}
Bertrand To{\"e}n, \emph{Derived {A}zumaya algebras and generators for twisted derived categories}, Invent. Math. \textbf{189} (2012), no.~3, 581--652. \MR{2957304}

\end{thebibliography}
\end{document}